\newtheorem{thm}{Theorem}
\newtheorem{lem}{Lemma}
\theoremstyle{definition}
\newtheorem{ques}{Question}
\renewcommand{\Re}{\mathbb R}
\newcommand{\F}{\mathfrak{F}}
\renewcommand{\S}{\mathbb{S}}
\DeclareMathOperator{\inter}{int}
\DeclareMathOperator{\bd}{bd}
\DeclareMathOperator{\dist}{dist}
\DeclareMathOperator{\conv}{conv}
\DeclareMathOperator{\card}{card}
\begin{document}
\title[On the Hadwiger number of starlike disks]{On the Hadwiger numbers of starlike disks}

\author[Z. L\'angi]{Zsolt L\'angi}
                                                                               
\address{Zsolt L\'angi, Dept. of Geometry, Budapest University of Technology,
Budapest, Egry J\'ozsef u. 1., Hungary, 1111}
\email{zlangi@math.bme.hu}
                                                                        
\subjclass{52A30, 52A10, 52C15}
\keywords{topological disk, starlike disk, touching, Hadwiger number, relative norm.}

\begin{abstract}
The Hadwiger number $H(J)$ of a topological disk $J$ in $\Re^2$
is the maximal number of pairwise nonoverlapping translates of $J$ that touch $J$.
It is well known that for a convex disk, this number is six or eight.
A conjecture of A. Bezdek., K. and W. Kuperberg says that the Hadwiger number of a starlike
disk is at most eight.
A. Bezdek proved that this number is at most seventy five for any starlike disk.
In this note, we prove that the Hadwiger number of a starlike disk is at most thirty five.
Furthermore, we show that the Hadwiger number of a topological disk $J$ such that
$(\conv J) \setminus J$ is connected, is six or eight.
\end{abstract}
\maketitle

\section{Introduction and Preliminaries}

This paper deals with topological disks in the Euclidean plane $\Re^2$.
We make use of the linear structure of $\Re^2$, and identify a point with its position vector.
We denote the origin by $o$, and the standard orthonormal basis of $\Re^2$ by $\{ e_x, e_y \}$.
For simplicity, we use the notation $(\alpha,\beta) = \alpha e_x + \beta e_y \in \Re^2$ for any $\alpha, \beta \in \Re$.
For a set $X \subset \Re^2$, $\conv X$, $\card X$, $\inter X$ and $\bd X$ denote
the \emph{convex hull}, the \emph{cardinality}, the \emph{interior} and the \emph{boundary}
of $X$, respectively.
If $X= \{ x_1, x_2, \ldots, x_k \}$ is a finite set, we may use the notation $\conv X = [x_1,x_2,\ldots, x_k]$.
In particular, for $p,q \in \Re^2$, the closed segment with endpoints $p$ and $q$ is denoted by $[p,q]$.
We set $(p,q)=[p,q] \setminus \{ p,q \}$, $(p,q]=[p,q]\setminus \{p\}$ and $[p,q) = [p,q] \setminus \{ q\}$.
The Euclidean norm of a point $p \in \Re^2$ is denoted by $|| p ||$.

A \emph{topological disk}, or shortly a \emph{disk}, is a compact subset of $\Re^2$ with
a simple, closed, continuous curve as its boundary.
In other words, a disk is a subset of $\Re^2$ homeomorphic to the closed unit disk of the plane.
Two disks $J_1$ and $J_2$ are \emph{nonoverlapping}, if their interiors are disjoint.
If $J_1$ and $J_2$ are nonoverlapping and $J_1 \cap J_2 \neq \emptyset$, then
$J_1$ and $J_2$ \emph{touch}.
A disk $S$ is \emph{starlike relative to a point $p$}, if, for every $q \in S$,
$S$ contains the closed segment $[p,q]$.
In particular, a convex disk $K$ is starlike relative to any point $p \in K$.

The \emph{Hadwiger number}, or \emph{translative kissing number}, of
a disk $J$, denoted by $H(J)$, is the maximal number of pairwise nonoverlapping translates of $J$
that touch $J$.
Gr\"unbaum \cite{G61} proved that the Hadwiger number of a parallelogram is eight, and that
the Hadwiger number of any other convex disk is six.
In \cite{HLS59}, the authors showed that the Hadwiger number of any disk is at least six.
A. Bezdek, K. and W. Kuperberg \cite{BKK95} asked if $H(J) \leq 8$ for any disk $J$, or if it is not so,
if there is a universal constant $\kappa \in \Re$ such that $H(J) \leq \kappa$ for every disk $J$
(see also Problem 5, p. 95 in the book \cite{BMP05}).
They formulated also the conjecture that $H(S) \leq 8$ for any starlike disk $S$.

In 2007, Cheong and Lee \cite{CL07} constructed, for every $n>0$, a disk with Hadwiger number at least $n$,
and thus showed that the answer for the question mentioned above is no.
On the other hand, A. Bezdek proved in \cite{B97} that the Hadwiger number of a starlike disk is at most seventy five.
In \cite{L08} it is shown that the Hadwiger number of a centrally symmetric starlike disk is at most twelve.
In the first part of the paper we prove the following theorem.

\begin{thm}\label{thm:starlike}
The Hadwiger number $H(S)$ of a starlike disk $S$ is at most thirty five.
\end{thm}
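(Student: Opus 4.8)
Begin by choosing a point with respect to which $S$ is starlike; after a translation we may assume it is $o$, and a standard argument lets us also assume $o\in\inter S$. Starlikeness then means precisely that every ray from $o$ meets $S$ in a single closed segment $[o,\,r(u)u]$, where $u$ ranges over $\S$ and $r\colon\S\to(0,\infty)$ is the radial function of $S$; put $\rho=\min r$, $R=\max r$, so that $\rho\,\B\subseteq S\subseteq R\,\B$, and rescale so that $\rho=1$. Write $S+v_1,\dots,S+v_n$ for pairwise nonoverlapping translates of $S$, each touching $S$; the goal is $n\le 35$.

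Next I would collect the elementary constraints on the vectors $v_i$. Since $\B\subseteq S$, overlap of $S$ with $S+v$ would follow from overlap of the two unit disks, so $\|v_i\|\ge 2$ and $\|v_i-v_j\|\ge 2$ for $i\ne j$. Since $o\in\inter S$ and $v_i\in\inter(S+v_i)$ while $\inter S\cap\inter(S+v_i)=\emptyset$, we get $\pm v_i\notin\inter S$; in particular $\|v_i\|\le 2R$. Comparing $S$ and $S+v_i$ along the line through $o$ and $v_i$ and using starlikeness, nonoverlap in fact forces $\|v_i\|\ge r(u_i)+r(-u_i)$ with $u_i=v_i/\|v_i\|$, and similarly $\|v_i-v_j\|\ge r(w_{ij})+r(-w_{ij})$ where $w_{ij}$ is the direction of $v_j-v_i$. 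Finally, picking a touching point $x_i\in S\cap(S+v_i)$, starlikeness gives $[o,x_i]\subseteq S$ and $[v_i,x_i]\subseteq S+v_i$, with $\|x_i\|\le R$, $\|x_i-v_i\|\le R$ and $\|x_i\|+\|x_i-v_i\|\ge\|v_i\|$; a useful consequence is that when $\|v_i\|$ is close to $2R$ the touching point $x_i$ must lie near the midpoint of $[o,v_i]$ and in a direction where $r$ is nearly maximal.

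The heart of the argument is to split the translates by the size of $\|v_i\|$: call $v_i$ \emph{near} if $\|v_i\|\le tR$ and \emph{far} otherwise, for a threshold $t\in(0,2)$ to be optimised. For the near vectors one argues by packing: they lie in the annulus $\{2\le\|v\|\le tR\}$, are pairwise at distance $\ge 2$ and satisfy $v_i\notin\inter S$, so the disks of radius $1$ centred at them are disjoint and contained in $\{1\le\|v\|\le tR+1\}$; since a large value of $R$ is itself caused only by the presence of far vectors, the two regimes can be traded against one another so that the near count is bounded by an absolute constant. For the far vectors, $S+v_i$ is nearly a homothet of $2S$ centred near $2r(u_i)u_i$ with $r(u_i)$ almost maximal; two far translates with nearby directions $u_i,u_j$ would have almost coincident bodies, contradicting $\|v_i-v_j\|\ge 2$, so far directions are separated, and — using the full nonoverlap condition $v_i-v_j\notin\inter S-\inter S$ together with the fact that $S$ must reach out to nearly maximal radius in \emph{every} far direction — one shows that once too many far directions accumulate, the ``reach'' of one translate in some difference direction meets the body of another. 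Balancing the near estimate, the far estimate and the choice of $t$ yields $n\le 35$.

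The step I expect to be the real obstacle is the bound on the far translates: neither a crude area bound nor the one-dimensional slice inequalities suffice, because $S$ may have many long, thin protrusions, so one is forced to exploit the global starlike structure — that the \emph{whole} segment $[o,x_i]$ lies in $S$ for each $i$, and that $S$ occupies nearly the full radius $R$ in every far direction — in order to show that such protrusions must interfere. Once that is in place, getting the constants in the near part, the far part and the threshold $t$ to combine to the clean value $35$ is bookkeeping rather than a new idea.
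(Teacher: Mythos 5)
Your outline does not reach a proof, and the two steps on which everything rests both have genuine gaps. First, the ``near'' bound fails as described. After normalizing so that the unit disk $\B\subseteq S\subseteq R\B$, the ratio $R$ is unbounded over starlike disks (a single long thin spike already makes $R$ huge), and it is a property of $S$ alone, not something ``caused by the presence of far vectors'': a disk with many spikes admits many translates touching it along those spikes with $\|v_i\|$ anywhere in $[2,2R]$. Points at pairwise Euclidean distance at least $2$ inside $\{\|v\|\le tR\}$ can number on the order of $R^2$, so the packing argument gives no absolute constant, and no trade-off with the far regime rescues it. Second, the ``far'' bound --- which you yourself identify as the real obstacle --- is only gestured at; no mechanism is given that turns ``far directions are separated'' into a numerical bound, and consequently nothing in the plan produces the constant $35$. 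A minor additional issue: the reduction to $o\in\inter S$ (equivalently $\rho>0$) is asserted as standard but is not; the paper only assumes $S$ is starlike relative to $o$, and the kernel of a starlike disk need not contain interior points.

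The paper's proof avoids the Euclidean metric for counting altogether, and that is precisely what overcomes the obstruction above. It first shows (Lemma~\ref{lem:convexity}, which uses the hypothesis that $S$ is a topological disk in an essential way --- the statement fails for starlike unions of segments) that the translation vectors $x_1,\dots,x_n$ lie in convex position with $o$ in the interior of their convex hull $C$. Using A.~Bezdek's lemma that $\inter\conv S_i$ contains at most one other translation vector, it extracts a subfamily of at least $\lfloor\frac{n-2}{2}\rfloor$ pairwise \emph{separated} translates, whose vectors then have pairwise relative distance at least $\frac12$ measured in the norm of $\bar C=\frac12(C-C)$. The theorem of Go\l\c{a}b and Sch\"affer that the self-circumference of a convex disk is at most eight (together with F\'ary--Makai) bounds that subfamily by $16$, whence $n\le 35$. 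The relative norm automatically rescales with the eccentricity of $S$, which is exactly what your Euclidean annulus argument cannot do; if you want to salvage your approach, you would need to replace every Euclidean distance estimate by one in the norm of $\conv S - \conv S$, at which point you are led back to the paper's argument.
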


If a set $S$ is the union of finitely many closed segments meeting at a given point, let us
call it a \emph{starlike} set of segments.
We may define the Hadwiger number of a starlike set $S$ of segments as the maximal cardinality
of a family of translates of $S$ such that each translate contains a point of $S$
and no translate crosses $S$ or any other translate in the family.
Clearly, any upper bound on the set of the Hadwiger numbers
of starlike sets of segments is an upper bound on the set of the Hadwiger numbers
of starlike disks.
We note that the estimate seventy five of A. Bezdek holds also for starlike sets of segments
(cf. Theorem 2 in \cite{B97}).
Unfortunately, our proof cannot be generalized for starlike sets of segments.
Figure~\ref{fig:starshaped} shows that the assertion in Lemma~\ref{lem:convexity}
fails if $S$ is not a disk.
This figure shows also the existence of a starlike set $S$ of segments with $H(S) > 8$.
In the author's knowledge, the configuration in Figure~\ref{fig:starshaped}
was found independently by K. Swanepoel and P. Papez.

\begin{figure}[ht]
\includegraphics[width=0.5\textwidth]{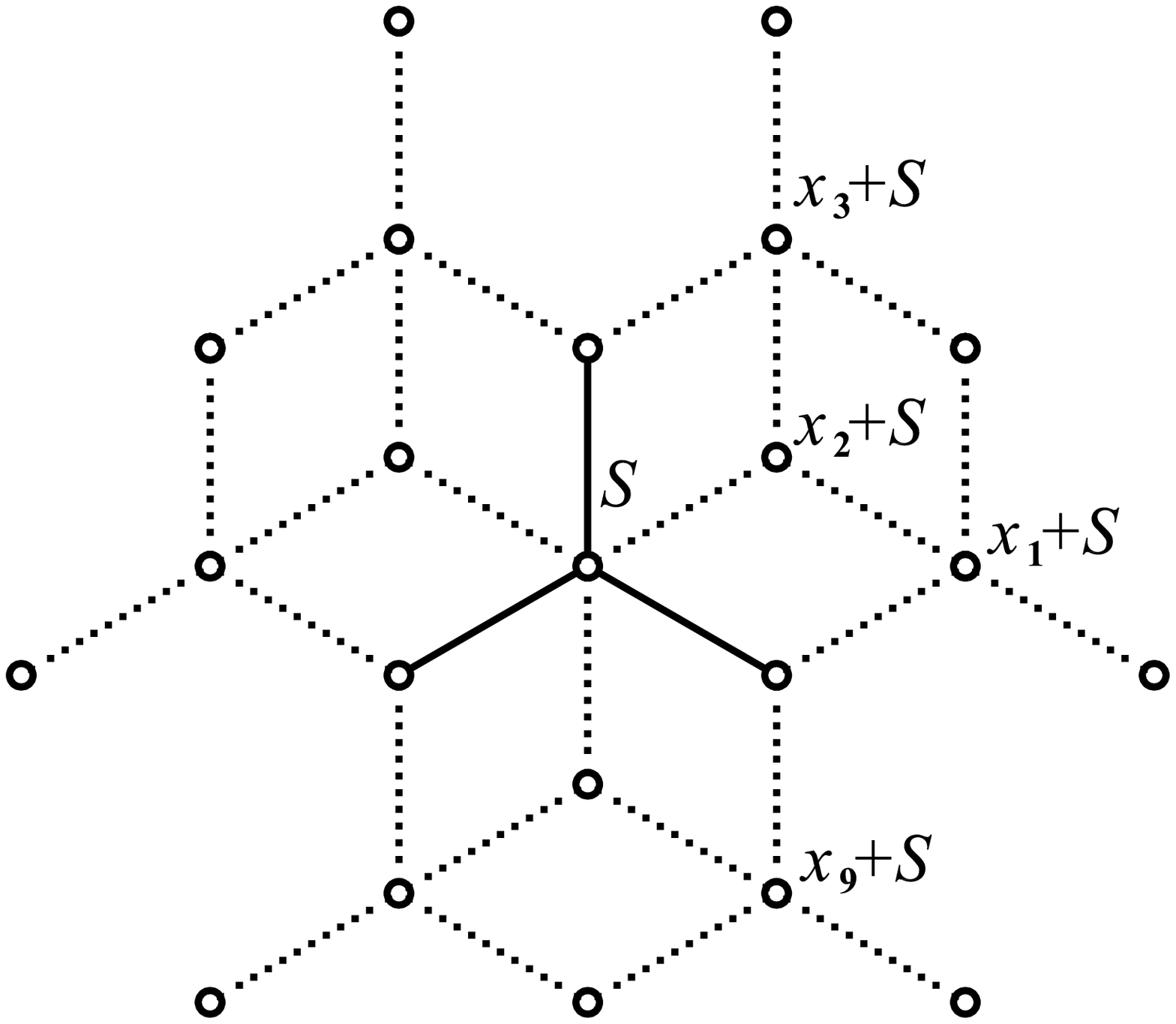}
\caption[]{}
\label{fig:starshaped}
\end{figure}

We ask the following question.

\begin{ques}
Is it true that the Hadwiger number of any starlike set of segments
is at most nine?
\end{ques}

In the second part we examine disks that are not necessarily starlike.
For a disk $J$, we call the connected components of $(\conv J) \setminus J$
the \emph{pockets} of $J$.
Clearly, a disk is convex if and only if it has no pockets.
Our goal is to characterize the Hadwiger numbers of disks with at most one pocket.

\begin{thm}\label{thm:pocket}
Let $J$ be a disk with at most one pocket.\\
{\bf \ref{thm:pocket}.1} If there is a direction $u \in \S^1$ such that
the intersections of $J$, with the two supporting lines of $\conv J$ parallel to $u$, are two segments
of the same length $\lambda > 0$, and $\lambda u+J$ touches $J$, then $H(J) = 8$.\\
{\bf \ref{thm:pocket}.2} Otherwise, $H(J)=6$ (cf. Figure~\ref{fig1}).
\end{thm}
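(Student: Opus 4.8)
The plan is to combine the known lower bound $H(J)\ge 6$ from \cite{HLS59} with a careful upper‑bound analysis that exploits the hypothesis of at most one pocket. I would first set up coordinates adapted to $\conv J$: since $J$ is a disk with at most one pocket, $\bd J$ differs from $\bd(\conv J)$ only along at most one arc, the boundary of the single pocket $P$; everywhere else $J$ and $\conv J$ share boundary. The key structural observation is Lemma~\ref{lem:convexity} (whatever its precise content), which presumably says that outside a controlled region the local picture is convex, so the touching translates of $J$ behave like touching translates of a convex disk except possibly for those translates whose contact with $J$ involves the pocket. Thus the first step is to show: at most a bounded number (in fact the crux is to show at most two) of the touching translates can ``use'' the pocket region, and the remaining translates touch $J$ along the convex part of its boundary and hence are constrained exactly as in Gr\"unbaum's theorem \cite{G61}.

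For part \ref{thm:pocket}.2, I would argue by contradiction: suppose $H(J)\ge 7$. By the reduction above, the touching translates split into those touching along $\bd(\conv J)\cap \bd J$ and at most a few using the pocket. For the ``convex‑part'' translates one gets, via the relative‑norm / central‑symmetrization argument underlying Gr\"unbaum's proof, that at most six of them can be pairwise nonoverlapping unless $\conv J$ is a parallelogram and the extremal octagonal configuration occurs. If $\conv J$ is not a parallelogram, six is already the ceiling for those, and I must rule out that the pocket gains an extra translate — this is where one uses that a translate dipping into the pocket $P$ must avoid crossing $\bd J$, which severely limits its position and forces it to overlap a neighbouring translate; the connectedness of the single pocket is essential here to prevent a translate from ``threading'' through. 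If $\conv J$ \emph{is} a parallelogram, the eighth translate of the octagonal packing exists only when the two extreme segments in some direction $u$ have equal length $\lambda$ and $\lambda u+J$ actually touches $J$ — precisely the hypothesis of \ref{thm:pocket}.1; absent that, the octagon degenerates and $H(J)=6$. Combining these cases with the universal lower bound $6$ gives $H(J)=6$ in case \ref{thm:pocket}.2 and $H(J)=8$ in case \ref{thm:pocket}.1 (the upper bound $8$ for the parallelogram hull being inherited from the convex case, since a pocket cannot create a ninth translate).

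For part \ref{thm:pocket}.1, I would exhibit the eight translates explicitly: take the standard octagonal kissing configuration of the parallelogram $\conv J$ — translates by $\pm\lambda u$ and by the six further vectors obtained from the sides and the two ``diagonal'' directions — and verify that replacing $\conv J$ by $J$ keeps them pairwise nonoverlapping and touching. The hypothesis that the two supporting segments in direction $u$ have equal length $\lambda$ and that $\lambda u+J$ touches $J$ is exactly what is needed so that the pair $\pm\lambda u + J$ survives; the other six translates touch $J$ along portions of $\bd J$ that coincide with $\bd(\conv J)$, so their validity is inherited from the convex picture. Then the upper bound $H(J)\le 8$ follows from the analysis in the previous paragraph.

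The main obstacle I anticipate is the pocket‑counting step: rigorously showing that the single pocket contributes at most the ``expected'' number of touching translates and never an extra one. This requires a genuinely two‑dimensional argument — a translate $c+J$ touching $J$ via the pocket has its boundary arc over the pocket nested against $\bd J$, and one must show that two such translates on the ``same side'' must overlap, while a translate using the pocket cannot be separated far enough from its neighbours along $\bd(\conv J)$ to coexist with a full six around the convex part. I expect this to hinge on Lemma~\ref{lem:convexity} together with a compactness/continuity argument on the position of the contact points, and on the connectedness hypothesis $(\conv J)\setminus J$ connected to forbid pathological multi‑pocket threading. Everything else — the coordinate setup, the inheritance of Gr\"unbaum's bound on the convex part, and the explicit octagon in case \ref{thm:pocket}.1 — should be routine once this counting lemma is in place.
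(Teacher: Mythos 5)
Your proposal is a strategy outline rather than a proof, and the strategy has two concrete defects. First, the step you yourself identify as the crux --- bounding the number of translates that interact with the pocket and reducing the rest to Gr\"unbaum's theorem --- is exactly where the real work lies, and the tool you propose for it does not apply: Lemma~\ref{lem:convexity} concerns starlike disks (it places the translation vectors on the boundary of their convex hull) and plays no role in the proof of Theorem~\ref{thm:pocket}. The genuine obstruction is that near the pocket a translate $x_i+J$ can overlap $\conv J$, and $\conv (x_i+J)$ can overlap $J$, so no subfamily of ``convex-part'' translates inherits Gr\"unbaum's bound for free. The paper's proof splits instead on whether the chord $[p,q]$ of $\conv J$ spanning the pocket is a longest chord in its direction. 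If it is, one proves that at most one translate overlaps $\conv J$ and at most one $\conv(x_i+J)$ overlaps $J$, and then counts admissible positions along the two boundary arcs and in the half-plane beyond $[p,q]$ (at most five on one side, at most three on the other), with a further delicate analysis to drop from eight to six in the non-parallelogram-like case. If it is not, one shows the convex hulls of the translates are pairwise nonoverlapping, inscribes the polygon of translation vectors in the difference body $\conv J-\conv J$, and applies Lassak's theorem on relatively equilateral inscribed polygons together with the lemma that a convex disk whose central symmetral is a parallelogram is itself a parallelogram, to conclude $n\le 6$. None of this machinery, nor any substitute for it, appears in your outline.

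Second, your dichotomy is organized around whether $\conv J$ is a parallelogram, which does not match the theorem. A one-pocket disk can be parallelogram-like --- equal supporting segments of length $\lambda$ parallel to $u$, with $\lambda u+J$ touching $J$ --- while $\conv J$ is \emph{not} a parallelogram: the translate $\lambda u+\conv J$ may overlap $\conv J$, yet $\lambda u+J$ and $J$ fail to overlap because the offending region lies inside the pocket. (For convex $J$ the condition does force a parallelogram, by concavity of the chord-length function in the direction $u$, but that argument breaks precisely when a pocket is present.) Your case analysis would therefore misclassify such disks as having $H(J)=6$, and your explicit eight-translate construction for \ref{thm:pocket}.1, built from the octagonal packing of a parallelogram hull, is not available for them either.
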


\begin{figure}[ht]
\includegraphics[width=0.9\textwidth]{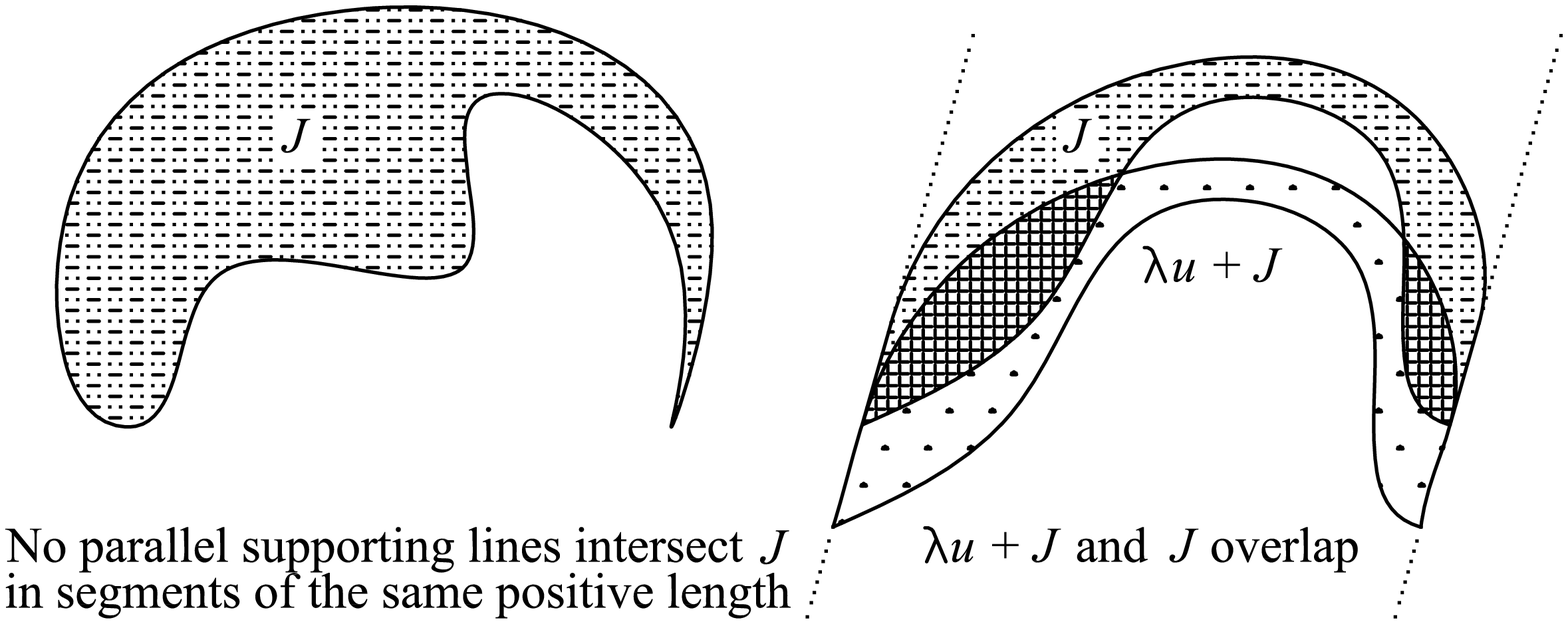}
\caption[]{}
\label{fig1}
\end{figure}

We call a disk $J$ that satisfies the conditions in \ref{thm:pocket}.1 a \emph{parallelogram-like disk} (cf. Figure~\ref{fig2}).

\begin{figure}[ht]
\includegraphics[width=0.9\textwidth]{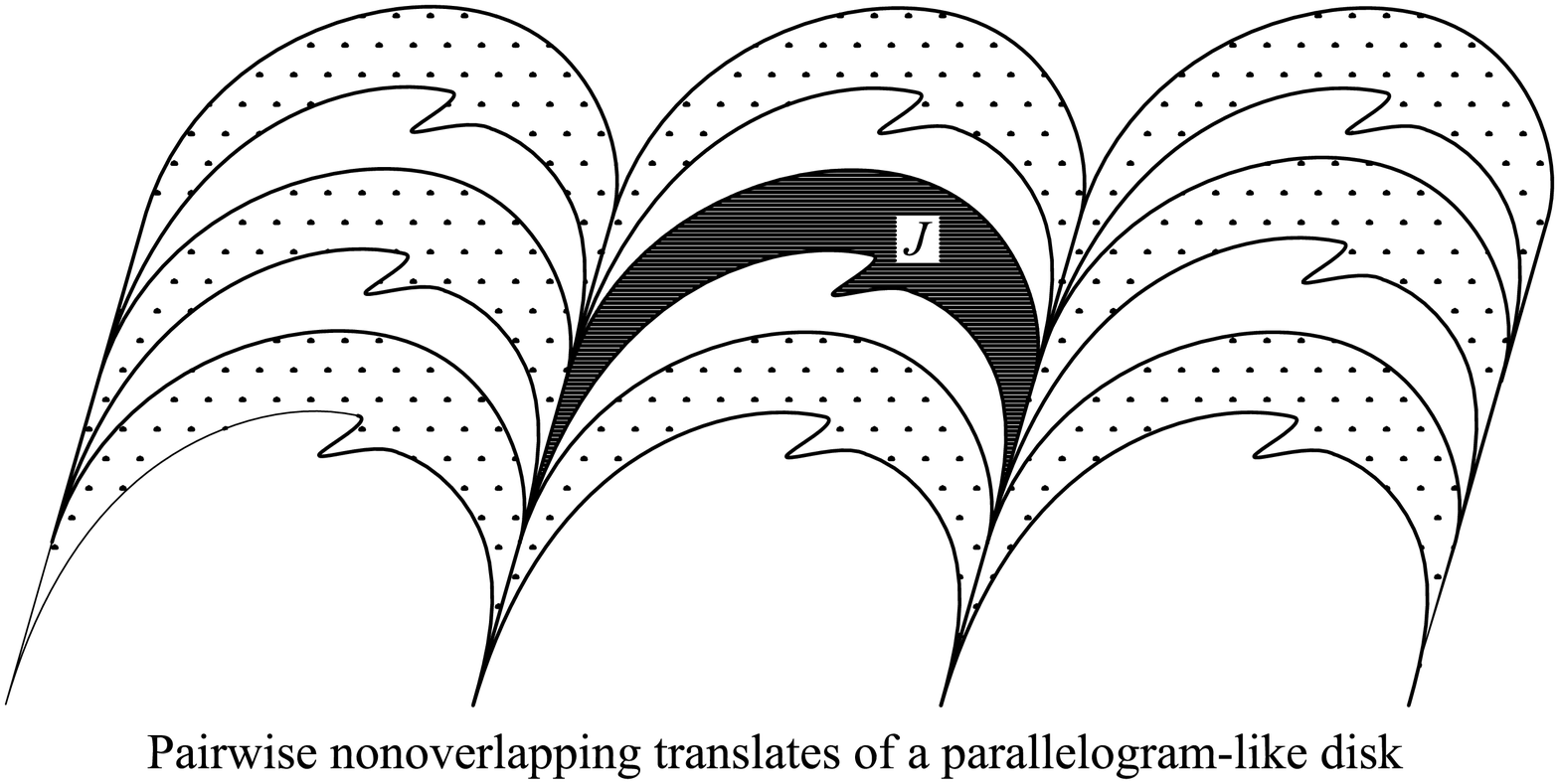}
\caption[]{}
\label{fig2}
\end{figure}

We note that the disk $D_n^m$ ($m \geq n$) in \cite{CL07}, with $n$ pairwise nonoverlapping translates touching $D_n^m$, has $2n+2$ pockets.
With reference to this observation and Theorem~\ref{thm:pocket}, we ask the following question.

\begin{ques}
Is it true for every positive integer $k$, that there is an integer $N(k)$ such that the Hadwiger number of a disk,
with at most $k$ pockets, is at most $N(k)$?
\end{ques}

\section{Proof of Theorem~\ref{thm:starlike}}\label{sec:proofstarlike}

Let $S \subset \Re^2$ be a disk that is starlike relative to the origin, and let $\F = \{ S_i : i=1,2,\ldots, n\}$
be a family of pairwise nonoverlapping translates of $S$, with $n=H(S)$, such that each $S_i = x_i +S$ touches $S$.
Let $K=\conv S$,  $K_i = \conv S_i$ for $i=1,2,\ldots, n$,
$X= \{ x_i: i=1,2,\ldots, n\}$, and $C = \conv X$.
Furthermore, let $R_i= \{ \lambda x_i: \lambda \in \Re \hbox{ and } \lambda \geq 0 \}$. 

First, we prove a few lemmas that we use in the proof of Theorem~\ref{thm:starlike}.

\begin{lem}\label{lem:convexity}
We have $o \in \inter C$, and $X \subset \bd C$.
\end{lem}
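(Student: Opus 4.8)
The plan is to prove the two assertions separately, each by contradiction, and I expect $X\subseteq\bd C$ to be the harder of the two.

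For $o\in\inter C$ I would use the maximality $n=H(S)$. Suppose $o\notin\inter C$; since $C$ is convex there is a unit vector $u$ with $\langle x_i,u\rangle\ge 0$ for every $i$. Let $w=\max_{p\in S}\langle p,u\rangle-\min_{p\in S}\langle p,u\rangle>0$ be the width of $S$ in the direction $u$, and pick $x_0=s-s'\in S-S$ with $\langle s,u\rangle$ minimal and $\langle s',u\rangle$ maximal on $S$, so that $\langle x_0,u\rangle=-w$ and $(x_0+S)\cap S\ne\emptyset$. Now $x+S$ and $y+S$ overlap iff $x-y\in\inter S-\inter S$; and since a linear functional attains its extrema on a topological disk only on its boundary, every $z\in\inter S-\inter S$ satisfies $\langle z,u\rangle>-w$. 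As $\langle x_0-x_i,u\rangle=-w-\langle x_i,u\rangle\le -w$, the translate $x_0+S$ overlaps neither $S$ (the case $x_i=o$) nor any $S_i$; and $\langle x_0,u\rangle<0\le\langle x_i,u\rangle$ shows it coincides with none of $S,S_1,\dots,S_n$. Hence $\{x_0+S\}\cup\F$ is a family of $n+1$ pairwise nonoverlapping translates of $S$ each touching $S$, contradicting $n=H(S)$.

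For $X\subseteq\bd C$ I would suppose $x_j\in\inter C$. Since $x_j\ne o$ (else $S_j=S$) and $x_j$ is not a vertex of $C$, we get $o\in\inter\conv\{x_i-x_j:i\ne j\}$, so by Carath\'eodory there are indices, say $a,b,c$, with $x_j-x_a,\ x_j-x_b,\ x_j-x_c$ surrounding $o$. The structural input from starlikeness is that the difference set $\inter S-\inter S$, which controls overlapping, is star-shaped with respect to $o$ (if $\sigma-\sigma'\in\inter S-\inter S$ then $\lambda\sigma,\lambda\sigma'\in\inter S$ for $\lambda\in(0,1]$, so $\lambda(\sigma-\sigma')\in\inter S-\inter S$), that $S-S$ is likewise a compact set star-shaped with respect to $o$, and that every $x_i$ lies on the ``outer shell'' $(S-S)\setminus(\inter S-\inter S)$ because $S_i$ touches but does not overlap $S$. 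Combining the surrounding relation for $x_j-x_a,x_j-x_b,x_j-x_c$, the pairwise non-overlap conditions $x_i-x_{i'}\notin\inter S-\inter S$, and this shell membership should force $x_j-x_k\in\inter S-\inter S$ for some $k\in\{a,b,c\}$, i.e.\ $S_j$ overlaps $S_k$ — the desired contradiction.

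The step I expect to be the main obstacle is exactly this last one. Star-shapedness of $\inter S-\inter S$ together with the finitely many non-overlap conditions is by itself insufficient — they can all hold with $\inter S-\inter S$ an arbitrarily small neighbourhood of $o$ — so the argument must genuinely use that each $x_i$ sits on the shell of $S-S$, equivalently that $S$ has nonempty interior, i.e.\ is a disk and not merely a starlike set of segments. This matches the remark in the introduction that Lemma~\ref{lem:convexity} fails for starlike sets of segments, and it pinpoints where the behaviour of $S$ along the rays $R_i$ (by starlikeness, $S\cap R_i$ is a segment issuing from $o$) and the positivity of the area of $S$ must be brought in.
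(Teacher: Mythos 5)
Your argument for the first assertion, $o \in \inter C$, is complete and correct, and it is a cleaner route than the paper's: the paper argues via a supporting line of $\conv\bigl(S \cup \bigcup_i S_i\bigr)$ through a point of $S$ and the existence of a further touching translate beyond it, whereas you exhibit the extra translate explicitly as $x_0+S$ with $x_0=s-s'$ realizing the width $w$ of $S$ in the direction $u$ of a half-plane containing $X$. The key facts you use --- that $x+S$ and $y+S$ overlap iff $x-y\in\inter S-\inter S$, and that $\langle z,u\rangle>-w$ for every $z\in\inter S-\inter S$ because a linear functional on a disk attains its extrema only on the boundary --- are both correct, and they give $\langle x_0-x_i,u\rangle\le -w$ for all $i$ (including $x_i=o$), hence a family of $n+1$ touching, pairwise nonoverlapping translates. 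This part stands on its own.

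The second assertion, $X\subset\bd C$, is where essentially all of the work in this lemma lies, and your proposal does not prove it: the decisive step is stated as ``should force $x_j-x_k\in\inter S-\inter S$ for some $k$,'' and you yourself flag it as the main obstacle. This is a genuine gap, not a routine verification. Moreover, the abstraction you set up --- star-shapedness of $S-S$ and $\inter S-\inter S$ together with the shell condition $x_i\in (S-S)\setminus(\inter S-\inter S)$ and the pairwise conditions $x_i-x_{i'}\notin\inter S-\inter S$ --- is almost certainly not enough information to close the argument, because the paper's proof does not run through difference sets at all. After reducing to $x_i\in\inter[o,x_j,x_k]$ with $x_j=e_x$, $x_k=e_y$, it picks witnesses $p\in S_j\cap S$, $q\in S_k\cap S$ and exploits the four concrete segments $[o,p],[o,q],[o,p-x_j],[o,q-x_k]\subset S$ supplied by starlikeness relative to $o$; the contradiction in each of the many subcases is either a \emph{crossing} of two such segments (forcing an overlap) or a Jordan-curve degeneracy (a point lying in the relative interior of a segment of a translate, forcing $S$ or $S_i$ not to be a disk). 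These crossing and topological arguments depend on the local structure of $\bd S$ near the touching points, information that is lost when one passes to $S-S$ and $\inter S-\inter S$. So while your diagnosis of \emph{why} disk-ness must enter (consistent with the failure of the lemma for starlike sets of segments) is accurate, the mechanism you propose for exploiting it is not the right one, and the case analysis that constitutes the actual proof is entirely absent.
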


\begin{proof}
Note that if $o \in \bd \conv \left( X \cup \{ o \} \right)$, then there is a supporting line $\bar{L}$ of
$F = \conv \left( S \cup \left( \bigcup_{i=1}^n S_i \right) \right)$ that passes through a point of $S$.
Thus, there is a translate of $S$, on the other side of $\bar{L}$, that touches $S$ and does not overlap $F$.
Since $n = H(S)$, we have a contradiction, which proves the first statement.

For contradiction, suppose that $x_i \in \inter C$ for some value of $i$.
Note that if $i \neq j$, then $x_i \notin [o,x_j]$.
Thus, there are indices $j \neq k$ such that $x_i \in \inter [o, x_j, x_k]$.
Since $H(S')=H(S)$ for any affine image $S'$ of $S$, we may assume that
$x_j=e_x$ and $x_k=e_y$.

Consider points $p \in S_j \cap S$ and $q \in S_k \cap S$, and note that $[o,p], [o,q], [o,p-x_j], [o,q-x_k] \subset S$.
Our aim is to show that for any such starlike disk $S$, $S_i$ overlaps $S$, $S_j$ or $S_k$.
In our examination, to help the reader follow the arguments, the segments in the figures belonging to 
$S$, $S_j$ or $S_k$ are drawn with continuous lines, and all the other lines are dotted or dashed.

Observe that $x_i$ is not contained in the open parallelograms $P_j = \inter [o, x_j, p, x_j-p]$,
as otherwise the segment $[x_i,x_i+p]$ crosses $[x_j,p]$, and thus, $S_i$ and $S_j$ overlap
(note that this argument is valid also in the case that $p \in [o,x_j]$).
Similarly, $x_i$ is not contained in $P_k = \inter [o,x_k,q,x_k-q]$, since otherwise $S_i$ and $S_k$ overlap.
We set $T = [o,x_j,x_k]$ and $Q= (\inter T) \setminus \left( P_j \cup P_k \right)$.
So far, we have that $x_i \in Q$.

\begin{figure}[ht]
\includegraphics[width=0.6\textwidth]{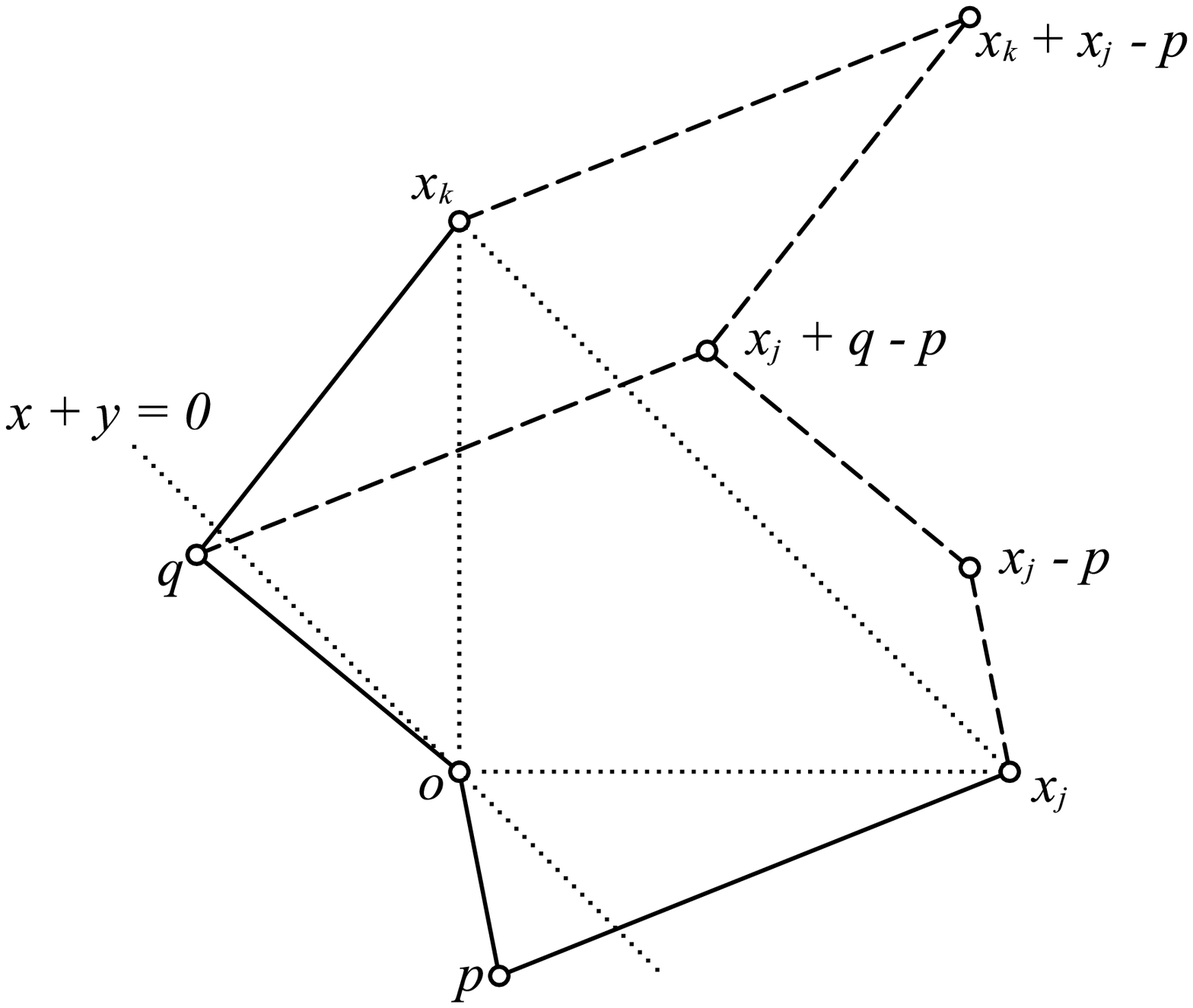}
\caption[]{}
\label{fig3}
\end{figure}

Let $f:\Re^2 \to \Re$ be defined by $f((\alpha, \beta)) = \alpha + \beta$.
We show that $0 \leq f(p) \leq 1$ and $0 \leq f(q) \leq 1$.

For contradiction, suppose first that $f(p) < 0$ or $f(q) < 0$.
Without loss of generality, we may assume that $f(p) < 0$ and $f(p) \leq f(q)$ (cf. Figure~\ref{fig3}), which yields
that $Q \subseteq [o,x_j-p) + \left( [o,q] \cup [q,x_k] \right)$.
If $x_i \in \left( [o,x_j-p) + [o,q) \right)$, then $[x_i,x_i+x_j-p]$ crosses
$[o,q]$, and thus, $S_i$ overlaps $S$; a contradiction.
Similarly, if $x_i \in \left( [o,x_j-p) + (q,x_k] \right)$, then $[x_i,x_i+x_j-p]$ crosses
$[q,x_k]$, and $S_i$ overlaps $S_k$.
Finally, if $x_i \in [q,q+x_j-p)$, then $q$ lies in the relative interior of a segment in $S_i$,
from which it readily follows that $S_i$ is not a disk; a contradiction.

Next, suppose that $f(p) > 1$ or $f(q) > 1$.
Without loss of generality, we may assume that $f(p) > 1$ and that $0 \leq f(q) \leq f(p)$.
Then $Q \subseteq [o,p) + \left( [o,x_k-q] \cup [x_k-q,x_k] \right)$.
From here, the assertion follows by an argument similar to the one in the previous paragraph.

In the following, we denote the line with equation $x+y=1$ by $L$.

\emph{Case 1}, both the $y$-coordinate of $p$ and the $x$-coordinate of $q$ are negative.
Without loss of generality, we may assume that $f(q) \geq f(p)$.
Then, since $f$ is linear, we have that $f(x_j+q-p) \geq 1$, or in other words, that
$L$ separates $x_j+q-p$ from the origin.
Note that $Q$ is covered by the union of the sets $U_1=[x_j,x_j-p) + [o,q)$,
$U_2 = [o,q) + [o,x_j-p)$, $U_3 = [x_k,q) + [o,x_j-p)$, $[q,x_j+q-p)$ and $[x_j-p,x_j+q-p)$
(cf. the left-hand side of Figure~\ref{fig:lemma1}).
If $x_i \in U_1$, then $[x_i,x_i+p]$ and $[x_j,x_j+q]$ cross, and thus, $S_i$ and $S_j$ overlap; a contradiction.
If $x_i \in U_2$ or $x_i \in U_3$, then $[x_i,x_i+x_j-p]$ crosses $[o,q]$ or $[q,x_k]$, respectively, and thus, $S_i$ overlaps $S$ or $S_k$.
If $x_i \in [q,x_j+q-p)$, then $S$ and $S_k$ touch each other in a relative interior point of $[x_i,x_i-p]$,
which yields that $S_i$ is not a disk; a contradiction.
Finally, if $x_i \in [x_j-p,x_j+q-p)$, then $S_i$ meets the segments $[o,q)$ and $[x_j,x_j+q)$ from different sides.
Since $[x_j,x_j+q)$ is the translate of $[o,q)$ in $S_j$, from this it follows that $S$ is not a disk; a contradiction.

\begin{figure}[ht]
\includegraphics[width=\textwidth]{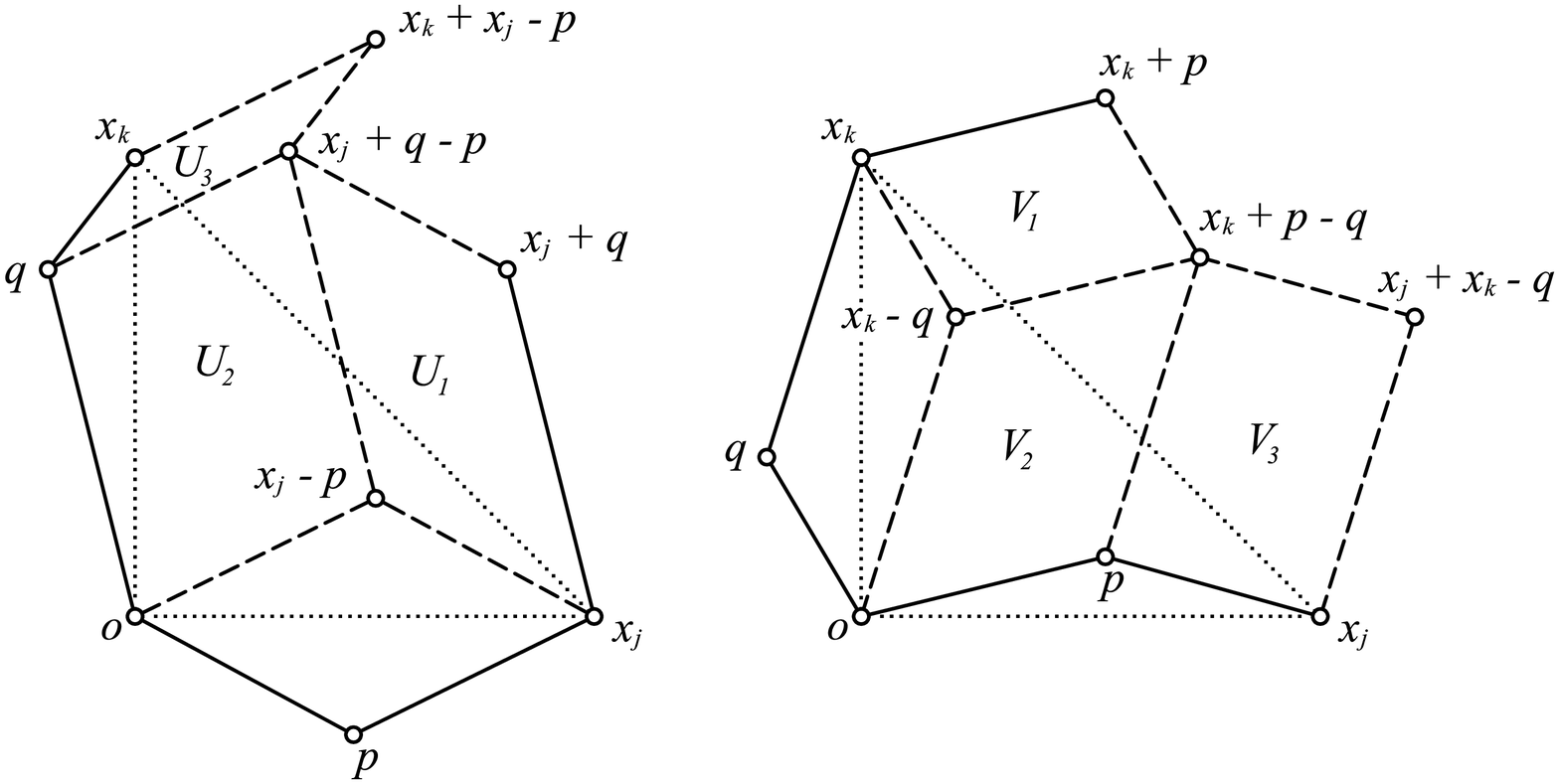}
\caption[]{}
\label{fig:lemma1}
\end{figure}

\emph{Case 2}, either the $y$-coordinate of $p$ or the $x$-coordinate of $q$ is negative.
Without loss of generality, we may assume that the $y$-coordinate of $p$ is nonnegative and
that the $x$-coordinate of $q$ is negative.
First, we examine the case that $f(p) \geq f(q)$, which yields that $L$ separates $o$ and $x_k+p-q$
(cf. the right-hand side of Figure~\ref{fig:lemma1}).
Then $Q$ is covered by the union of the sets $V_1 = [x_k,x_k-q) + [o,p)$, $V_2 = [o,x_k-q) + [o,p)$,
$V_3 = [x_j,p) + [o,x_k-q)$, $[p,x_k+p-q)$ and $[x_k-q,x_j+p-q)$.
If $x_i \in V_1$, $x_i \in V_2$ or $x_i \in V_3$, then $S_i$ overlaps $S_k$, $S$ or $S_j$, respectively.
If $x_i \in [p,x_k+p-q)$ or $x_i \in [x_k-q,x_j+p-q)$, then $S$ is not a disk.

If $f(p) \leq f(q)$, then the assertion follows by a similar argument.

\emph{Case 3}, both the $y$-coordinate of $p$ and the $x$-coordinate of $q$ are nonnegative.
The proof in this case is similar to the proof in the previous two cases, hence we omit it.
\end{proof}

With reference to Lemma~\ref{lem:convexity}, we may relabel
the indices of the elements of $\F$ in a way that $x_1, x_2, \ldots, x_n=x_0$ are in counterclockwise order
on $\bd C$.

\begin{lem}\label{lem:counterclockwise}
Consider points $w_i \in S \cap S_i$ for $i=1,2,\ldots, n$. Then $w_1, w_2, \ldots, w_n$
are in this counterclockwise order around $o$.
\end{lem}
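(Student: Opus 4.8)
My plan is to argue by contradiction, reducing to a configuration of $S$ together with three of the translates and producing a crossing pair of segments, in the spirit of the proof of Lemma~\ref{lem:convexity}.

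First, Lemma~\ref{lem:convexity} gives $o\in\inter C$ and $X\subset\bd C$, so — since a ray from $o$ meets the convex curve $\bd C$ exactly once — the relabeled points $x_1,\dots,x_n$ have pairwise distinct directions and occur in counterclockwise cyclic order around $o$. A cyclic order of three or more points around $o$ is determined by the orientations of its triples; hence, if $w_1,\dots,w_n$ were not in counterclockwise cyclic order around $o$, there would be indices $a<b<c$ with $w_a,w_b,w_c$ in clockwise order around $o$, while $x_a,x_b,x_c$ are in counterclockwise order around $o$. After relabeling I would take $(a,b,c)=(1,2,3)$, so that only $S$ and $S_1,S_2,S_3$ are involved.

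For each $i$, starlikeness of $S$ gives $[o,w_i]\subset S$; since $w_i-x_i\in S$, it gives $[o,w_i-x_i]\subset S$ and hence $[x_i,w_i]\subset S_i$; translating $S$ into $S_j$ also yields $[x_j,\,x_j+w_i]\subset S_j$ and $[x_j,\,x_j+w_i-x_i]\subset S_j$. As observed in the proof of Lemma~\ref{lem:convexity}, since $\{S\}\cup\F$ is nonoverlapping, a segment lying in one of $S,S_1,S_2,S_3$ cannot cross a segment lying in a different one of them, for such a crossing would force those two disks to overlap. Next I would apply an orientation-preserving affine transformation fixing $o$ (which preserves $H$, starlikeness, nonoverlapping, touching, and the orientation of triples around $o$) to put $x_1=e_x$ and $x_2=e_y$; then $x_3$ lies outside the open first quadrant and $w_1,w_2,w_3$ are in clockwise order around $o$. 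As in Lemma~\ref{lem:convexity}, I would first localize the points — bounding a suitable linear functional on the $w_i$ and on the $w_i-x_i$ — and then run a case analysis on the position of $x_3$ and on the signs of the coordinates and the relative order of the projections of $w_1,w_2,w_3$; in each case one exhibits one of the segments recorded above, lying in one of $S,S_1,S_2,S_3$, that crosses a segment lying in a different one of them, contradicting that $\{S\}\cup\F$ is nonoverlapping.

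The hard part will be this case analysis: ensuring that every position of $x_3$ and of $w_1,w_2,w_3$ compatible with ``$x$'s counterclockwise, $w$'s clockwise'' is covered, and checking in each case that the two distinguished segments genuinely cross. The normalization and the implication ``crossing segments in two disks $\Rightarrow$ the disks overlap'' are routine, the latter being exactly the device used repeatedly in the proof of Lemma~\ref{lem:convexity}. Degenerate positions (two of the $w_i$ on a common ray, a point lying on one of the segments, or $o\in\bd S$) I would dispose of separately, as such cases are handled there.
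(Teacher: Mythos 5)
Your proposal is a plan rather than a proof: the entire case analysis, which you yourself identify as ``the hard part,'' is left undone, and it is precisely there that the difficulty of this lemma lives. More seriously, the strategy itself --- reduce to one bad triple $S_a,S_b,S_c$ and exhibit two crossing segments among the canonical segments of $S,S_a,S_b,S_c$ --- is not the route the paper takes, and the paper's proof gives concrete evidence that it cannot be completed as stated. The paper does \emph{not} derive a direct overlap from an out-of-order triple. Instead it introduces auxiliary points $\bar{x}_i$ (equal to $x_i$ when $w_i \in \inter C$, and to $[o,w_i]\cap \bd C$ otherwise), and the key step is a \emph{nesting} statement, not a crossing: if $x_j$ lies in the sector $Q_i$ between the rays through $x_i$ and $\bar{x}_i$, the conclusion is $w_i \in [o,w_j,x_j]$ and $w_j \notin \inter C$. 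That is, a translate's hook $[o,w_i]\cup[w_i,x_i]$ \emph{may} swing far out of the angular sector of $x_i$ without any two of the obvious segments crossing; the contradiction only materializes at the level of the cyclic order of the $\bar{x}_i$ on $\bd C$, via a planarity argument that uses $o \in \inter C$, $X \subset \bd C$, the adjacency of $x_i$ and $x_{i+1}$ on $\bd C$ (no other $x_k$ between them), and whether each $w_i$ lies in $\inter C$. All of this global information is discarded when you pass to an isolated triple $a<b<c$, so your case analysis would encounter configurations (e.g.\ $w_a$ lying on the ray through $x_b$ beyond $x_b$, with the hook of $S_b$ leaving $o$ in the direction of $x_a$) in which none of the segments $[o,w_i]$, $[x_i,w_i]$, $[x_j,x_j+w_i]$, $[x_j,x_j+w_i-x_i]$ cross one another, and no contradiction is available from crossings alone.

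Two further points. First, the ``degenerate positions'' you defer are load-bearing: the paper disposes of cases such as $x_j \in (w_i,x_i)$ not by a crossing but by the observation that two overlapping collinear translates of the same segment, lying in two nonoverlapping translates of $S$, force $S$ to fail to be a topological disk --- a different mechanism you would need to supply. Second, your proposed localization ``by bounding a suitable linear functional'' imports a device from Lemma~\ref{lem:convexity} whose effectiveness there rests on the hypothesis $x_i \in \inter[o,x_j,x_k]$; no analogous containment is available here, so it is not clear what the functional would be or what it would bound. To repair the argument you would essentially have to rediscover the paper's machinery: the modified boundary points $\bar{x}_i$, the nesting lemma for the sectors $Q_i$, and the final noncrossing-arcs planarity step.
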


\begin{proof}
Note that as $o \in \inter C$, and the points $x_1, x_2, \ldots, x_n$ are in this counterclockwise order on $\bd C$,
they are in the same order around $o$.
We define the points $\bar{x}_i$ as follows: If $w_i \in \inter C$, then $\bar{x}_i = x_i$, and otherwise
it is the intersection point of $[o,w_i]$ and $\bd C$.
Let $\bar{R}_i = \{ \lambda \bar{x}_i : \lambda \in \Re \hbox{ and } \lambda \geq 0 \}$,
and let $Q_i = \inter \conv (R_i \cup \bar{R}_i)$.

First, we show that if $x_j \in Q_i$ for some $j \neq i$, then
$x_j \notin [o,w_i,x_i]$, $w_i \in [o,w_j,x_j]$ and $w_j \notin \inter C$.
Consider some $i \neq j$ with $x_j \in Q_i$. Then $w_i \notin \inter C$, as otherwise $R_i = \bar{R}_i$.
If $x_j \in \inter [o,w_i,x_i]$, then $[x_j,x_j+w_i]$ crosses $[x_i,w_i]$, and $S_i$ and $S_j$ overlap; a contradiction.
If $x_j \in (w_i,x_i)$, then $[x_j,x_j+(w_i-x_i)] \subset Sj$, which, since this segment is the translate
of $[x_i,w_i]$ by $x_j-x_i$ and since their relative interiors intersect, yields that $S$
is not a disk; a contradiction.
If $x_j \notin [o,w_i,x_i]$, then $[w_j,x_j] \cap \bar{R}_i \neq \emptyset$, as otherwise $[o,w_j]$ crosses
$[w_i,x_i]$ or $x_i \in \inter [o,w_j,x_j]$ (cf. Figure~\ref{fig4}).
Thus, in this case $w_i \in [o,w_j,x_j]$, which, as $x_j \in \bd C$, yields that $w_j \notin \inter C$.

\begin{figure}[ht]
\includegraphics[width=0.6\textwidth]{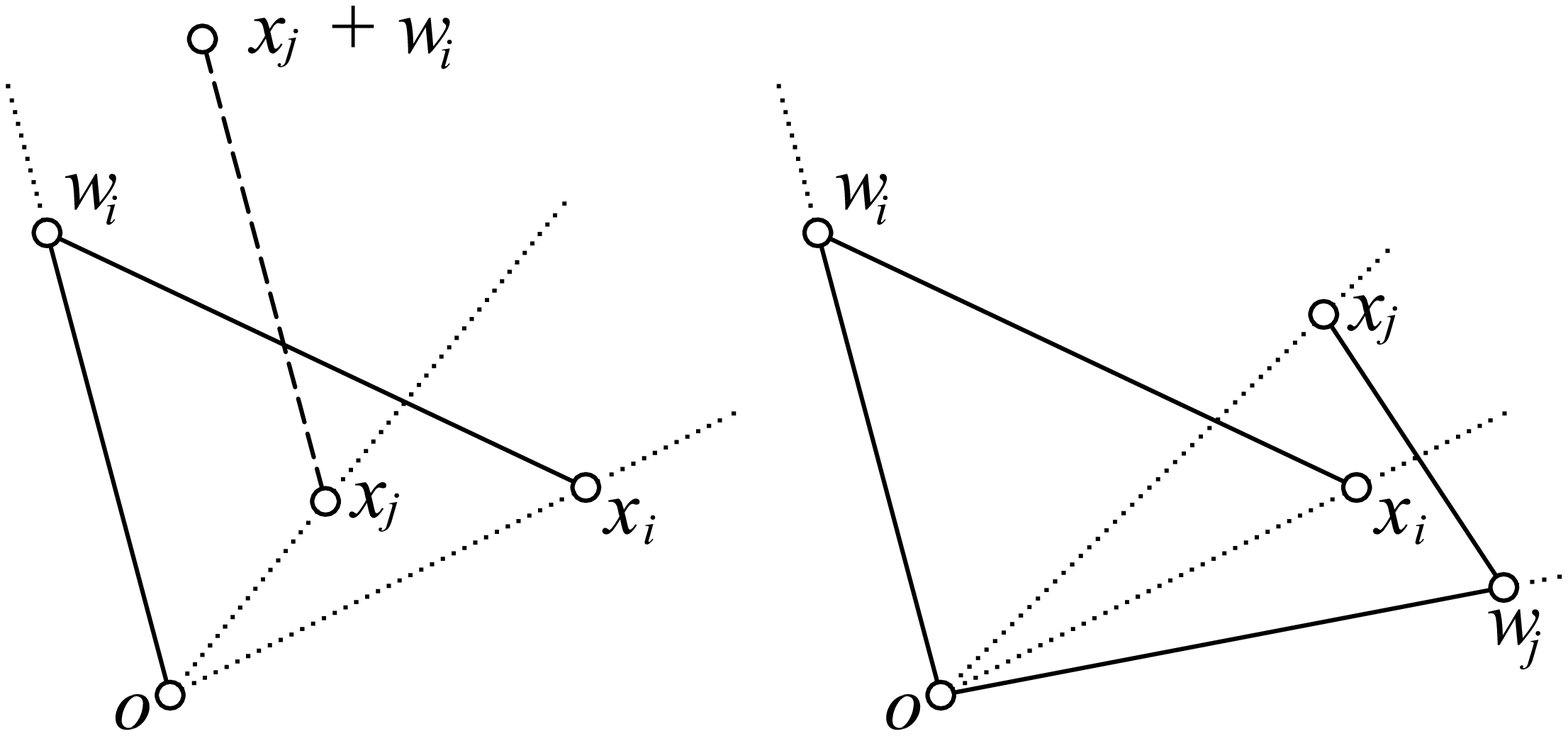}
\caption[]{}
\label{fig4}
\end{figure}

Next, we show that $\bar{x}_1, \bar{x}_2, \ldots, \bar{x}_n$ are in this counterclockwise order around $o$.
To do this, it suffices to show that there are no values of $i \neq j$ such that
$\bar{x}_i, \bar{x}_j$ and $\bar{x}_{i+1}$ are in this counterclockwise order around $o$.
Suppose for contradiction that there are such values.

First we consider the case that $x_i$, $w_j$ and $x_{i+1}$ are in this counterclockwise order around $o$.
Since $x_i$, $x_j$ and $x_{i+1}$ are \emph{not} in this counterclockwise order, we have
$x_i \in Q_j$ or $x_{i+1} \in Q_j$, say $x_i \in Q_j$.
Then, clearly, $x_{i+1} \notin Q_j$, and, by the argument in the second paragraph of this proof,
we have $x_i \notin [o,x_j,w_j]$, $w_j \notin \inter C$ and $w_j \in [o,w_i,x_i]$.
Thus, $w_i \notin \inter C$, which yields that $x_i, w_i$ and $x_{i+1}$ are in this counterclockwise order.
Since $x_j, w_i$ and $w_{i+1}$ are in this counterclockwise order, it follows that
so are $\bar{x}_j, \bar{x}_i$ and $\bar{x}_{i+1}$.

Now we examine the case that $x_i$, $w_j$ and $x_{i+1}$ are not in this counterclockwise order.
Then, since neither are $x_i$, $x_j$ and $x_{i+1}$, we have that $w_j \in Q_i$ or $w_j \in Q_{i+1}$,
say $w_j \in Q_i$. From this, we obtain that $w_j \in [o,w_i,x_i]$, and as $x_j \notin [o,w_i,x_i]$,
we have that $[w_j,x_j]$ intersects both $[o,x_i]$ and $[o,x_{i+1}]$.
Since $x_i,x_j \notin [o,w_{i+1},x_{i+1}]$, this implies that $w_j \in [o,w_{i+1},x_{i+1}]$
and $w_{i+1} \in [o,w_i,x_i]$. From this, it readily follows that $w_i,w_j,w_{i+1} \notin \inter C$,
and thus, that $\bar{x}_i$, $\bar{x}_{i+1}$ and $\bar{x}_j$ are in this counterclockwise order around $o$.

We have shown that $\bar{x}_1, \bar{x}_2, \ldots, \bar{x}_n$ are in this counterclockwise order around $o$.
Since these points are in $\bd C$ and they can be connected to $o$ by mutually noncrossing polygonal curves
in $\inter C$, their counterclockwise order around $o$ is the same as that of the points $w_1, w_2, \ldots, w_n$.
\end{proof}

We need the next lemma of A. Bezdek to prove Lemma~\ref{lem:nonoverlapping} (cf. Lemma 3 in \cite{B97}).

\begin{lem}[A. Bezdek]\label{lem:Bezdek}
For any $i = 1,2, \ldots, n$, $\inter K_i$ contains at most one element of $X \setminus \{ x_i \}$.
\end{lem}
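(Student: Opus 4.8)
The plan is to argue by contradiction: suppose that for some index $i$, the open convex region $\inter K_i$ contains two distinct points $x_j, x_k \in X \setminus \{x_i\}$. Since $H(S')=H(S)$ for any affine image, we may normalize the situation so that $K_i$ is comfortable to work with; in fact it is cleaner to translate by $-x_i$ and work with $\inter K = \inter \conv S$ containing the two points $y_j = x_j - x_i$ and $y_k = x_k - x_i$, which are the relative translation vectors of $S_j$ and $S_k$ with respect to $S_i$. The key geometric fact we will exploit is that $S$ is starlike relative to $o$ (so $S_i$ is starlike relative to $x_i$), and that $S_j, S_k$ must each touch $S$ from outside while being mutually nonoverlapping and each nonoverlapping with $S_i$.

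First I would record what it means for $y_j \in \inter K = \inter \conv S$: there is a point $s \in S$ and a representation of $y_j$ as a point strictly inside $\conv S$, hence $y_j$ lies strictly between $o$ and $\bd K$ along the ray $R$ through $y_j$; in particular the segment $[o,y_j]$ lies in $S$ (by starlikeness) and so does its translate $[y_j, 2y_j]$ inside $S_j = y_j + S$ — no, more carefully, $[x_i, x_i + y_j] = [x_i, x_j] \subset S_i$, and likewise the translate of this segment in $S_j$. The idea is then to locate a point $w \in S \cap S_i$ along which $S$ touches $S_i$, observe $[x_i, w] \subset S_i$ and $[x_i, w - y_j] \subset S_i$ — tracking which of the three disks $S_i, S_j, S_k$ a short segment emanating from $x_j$ or $x_k$ must cross, exactly as in the case analysis of Lemma~\ref{lem:convexity}. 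Since $y_j, y_k$ both lie in $\inter K$, the segment $[y_j, y_k]$ lies in $\inter K$ as well, and the starlike structure forces certain chords of $S$ through $o$ to be crossed by translated chords inside $S_j$ or $S_k$.

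The main obstacle, and the heart of Bezdek's argument, will be organizing the case distinction by the cyclic position of $x_j$ and $x_k$ relative to $x_i$ (using the counterclockwise ordering from Lemma~\ref{lem:counterclockwise}) together with the position of the touching points $w_i, w_j, w_k$. One expects to split into the case where $x_j$ and $x_k$ lie on the same side of the ray $R_i$ versus opposite sides, and within each, to exhibit explicitly two chords — one in $S$ through $o$, one of its translates inside $S_j$ (or $S_k$) — whose relative interiors must intersect, contradicting the fact that $S$ (hence each $S_m$) is a topological disk and so cannot meet a translate of one of its own chords transversally. Since this is precisely Lemma~3 of \cite{B97}, I would invoke that reference for the full case-by-case verification rather than reproduce it, noting that no property of $S$ beyond being a starlike disk is used. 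I would also remark that this is the only place where the bound could in principle be improved, since any sharpening of the ``at most one'' count propagates through Lemma~\ref{lem:nonoverlapping} to the final constant $35$.
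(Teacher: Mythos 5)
The paper gives no proof of this lemma: it is stated as A.~Bezdek's result and justified solely by the citation to Lemma~3 of \cite{B97}, which is exactly what your proposal ultimately does, so your approach coincides with the paper's. One caution about the exploratory sketch preceding your citation: from $y_j \in \inter \conv S$ you cannot conclude $[o,y_j] \subset S$ (nor $[x_i,x_j] \subset S_i$), since starlikeness only gives $[o,q] \subset S$ for $q \in S$, and for a non-convex starlike disk $\inter \conv S$ contains points outside $S$; this does not affect your final answer because you defer to the reference, but it would be a genuine gap if the sketch were meant to stand on its own.
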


We call $S_i$ and $S_j$ \emph{separated}, if $x_i \notin \inter K_j$, and $x_j \notin \inter K_i$.

\begin{lem}\label{lem:nonoverlapping}
There is a subfamily $\F'$ of $\F$, of cardinality at least $\lfloor \frac{n-2}{2} \rfloor$,
such that any two elements of $\F'$ are separated.
\end{lem}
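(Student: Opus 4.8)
The plan is to build $\F'$ greedily while exploiting the cyclic order on the indices supplied by Lemmas~\ref{lem:convexity} and~\ref{lem:counterclockwise}, together with the bound from Lemma~\ref{lem:Bezdek}. The key observation I would try to establish is a \emph{locality} statement: if $S_i$ and $S_j$ fail to be separated, then $i$ and $j$ must be close in the cyclic order $x_1,\dots,x_n$ around $o$. More precisely, suppose $x_j \in \inter K_j \cap X$ with $x_j \in \inter K_i$ (the offending situation). Since $x_i \in \bd C$ and $o\in \inter C$, the ray $R_i$ separates the boundary $\bd C$ into two arcs; I would argue that the segment or wedge swept out between $x_i$ and a touching point $w_i \in S\cap S_i$ forces any $x_j$ lying in $\inter K_i$ to have its own touching point $w_j$ interleaved with $w_i$ in the counterclockwise order of Lemma~\ref{lem:counterclockwise}. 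Combined with Lemma~\ref{lem:Bezdek}, which says $\inter K_i$ captures at most one other center, this should show that each $S_i$ is non-separated from at most a bounded number (ideally at most two) of its cyclic neighbours $S_{i-1}, S_{i+1}$.

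Granting such a locality statement, I would finish by a matching/interval-scheduling argument on the cycle $C_n$ (the indices $1,\dots,n$ arranged cyclically). Define a graph $G$ on $\{1,\dots,n\}$ by joining $i\sim j$ whenever $S_i$ and $S_j$ are \emph{not} separated; the locality step says every edge of $G$ joins cyclically consecutive indices, so $G$ is a subgraph of the cycle $C_n$. A subfamily $\F'$ with all pairs separated is exactly an independent set in $G$. Since $G\subseteq C_n$ and $C_n$ has independence number $\lfloor n/2\rfloor$, and any induced subgraph of a path/cycle on $n$ vertices has an independent set of size at least $\lceil n/2\rceil$ after deleting at most one vertex, a short case analysis (even treating $G$ as the full cycle $C_n$) yields an independent set of size at least $\lfloor (n-2)/2\rfloor$; the ``$-2$'' absorbs the loss from closing up the cycle (handling the wrap-around pair $x_n, x_1$ and, if necessary, one extra vertex where three consecutive disks are pairwise non-separated).

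The main obstacle I anticipate is precisely the locality step: ruling out that $S_i$ is non-separated from some $S_j$ with $j$ \emph{far} from $i$ in the cyclic order. A priori $K_i=\conv S_i$ can be a large triangle, and $x_j\in\inter K_i$ only constrains $x_j$ to lie in that triangle, not near $x_i$ on $\bd C$. The resolution should come from starlikeness: $[x_i, w_i]\subset S_i$ and $[o,w_i]\subset S$, so the center $x_j$ being inside $K_i$ together with the non-crossing hypothesis forces the touching point $w_j$ into a wedge bounded by the rays toward $w_i$ and $x_i$; since the $w$'s respect the same cyclic order as the $x$'s (Lemma~\ref{lem:counterclockwise}), $x_j$ is pinned between $x_i$ and one neighbour. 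Making this wedge argument precise — correctly handling the degenerate cases $w_i\in[o,x_i]$, $x_j\in[o,x_i]$, and $w_i\in\inter C$ versus $w_i\in\bd C$ — is where the real work lies, and I would expect it to mirror the casework already seen in the proofs of Lemmas~\ref{lem:convexity} and~\ref{lem:counterclockwise}.
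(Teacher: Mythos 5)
There is a genuine gap, and you have in fact located it yourself: the entire content of the lemma is the ``locality'' step, and your proposal does not prove it --- it only describes a hoped-for wedge argument and flags it as ``where the real work lies.'' Worse, the locality statement in the strong form you propose (every non-separated pair is cyclically consecutive, so the non-separation graph is a subgraph of $C_n$) is not what the paper establishes, and the shape of the bound is a warning sign: if locality held globally, the independence-number argument on $C_n$ would give $\lfloor n/2\rfloor$ separated translates, not $\lfloor\frac{n-2}{2}\rfloor$. The $-2$ is not a bookkeeping loss from ``closing up the cycle'' (the independence number of $C_n$ is exactly $\lfloor n/2\rfloor$); it is there because locality can fail, and the paper only shows it fails for at most two indices. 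Concretely, the paper proves: if $x_j\in\inter K_i$ for some $j\notin\{i-1,i,i+1\}$, then $o\in\inter[x_i,x_i+p,x_i+q]$ for suitable $p,q\in S$, and neither $w_{i-1}$ nor $w_{i+1}$ can lie in the region $\inter\left([x_i,o,x_i+p]\cup[x_i,o,x_i+q]\right)$, forcing the counterclockwise angle $\angle(w_{i-1},o,w_{i+1})$ to exceed $\pi$. Since the $w_k$ are in counterclockwise order around $o$ (Lemma~\ref{lem:counterclockwise}), at most two indices $i$ can have this property; deleting those at most two members of $\F$ and then taking every other remaining member yields $\F'$. Your sketch contains none of this angular mechanism, and your fallback observation that Lemma~\ref{lem:Bezdek} bounds the number of non-separation edges by $n$ only yields an independent set of size roughly $n/3$, which is not enough.

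A second, smaller issue: an edge of your graph $G$ can arise in two ways ($x_j\in\inter K_i$ or $x_i\in\inter K_j$), and Lemma~\ref{lem:Bezdek} only controls the first kind per vertex; nothing in your sketch bounds how many distinct $K_j$ can contain a fixed $x_i$, so even the degree of $G$ is not obviously bounded by your argument. The paper sidesteps this by working with the condition $X\cap\inter K_i\subset\{x_{i-1},x_i,x_{i+1}\}$ for each $i$ separately and then alternating, rather than with a symmetric graph. To repair your proposal you would need to replace the unproven locality claim with the paper's quantitative version (at most two exceptional indices, each certified by an angle $>\pi$ at $o$), at which point you would essentially be reproducing the paper's proof.
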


\begin{proof}
For $i=1,2,\ldots,n$, we choose points $w_i \in S \cap S_i$, and set $\Gamma_i = [o,w_i] \cup [w_i,x_i]$.
By Lemma~\ref{lem:counterclockwise}, the points $w_1, w_2, \ldots, w_n$ are in counterclockwise order around $o$.

By Lemma~\ref{lem:Bezdek}, $\inter K_i$ contains at most one point of $X$ different from $x_i$.
Hence, if $X \cap \inter K_i \subset \{ x_{i-1}, x_i, x_{i+1} \}$ for every value of $i$,
the assertion immediately follows with $\F' = \{ S_{2m}: m=1,2,\ldots, \lfloor n/2 \rfloor \}$.
Thus, it suffices to show that $X \cap \inter K_i \not\subset \{ x_{i-1}, x_i, x_{i+1} \}$
for at most two values of $i$, as in this case, after removing these elements of $\F$, we may choose
the elements of $\F'$ like in the previous case.

Consider the case that $x_j \in \inter K_i$ for some $j \notin \{i-1,i,i+1 \}$.
Without loss of generality, let $i=2$.
Since $o \in \inter C$, we have that the line $L_2 = R_2 \cup (-R_2)$ separates $x_1$ and $x_3$
(recall the definition of $R_i$ from the first paragraph of Section~\ref{sec:proofstarlike}).
Without loss of generality, we may assume that $x_j$ and $x_3$ lie in the same
closed half plane $H$ bounded by $L_2$, which yields that $L_3 = R_3 \cup (-R_3)$ separates $x_2$ and $x_j$.

Since $x_j \in \inter K_2$, there are points $p,q \in S$ such that $x_j \in \inter[x_2,x_2+p,x_2+q]$.
Note that by Lemma~\ref{lem:Bezdek}, we have that $x_3 \notin \inter [x_2,x_2+p,x_2+q]$.
For contradiction, suppose that $o \notin \inter [x_2,x_2+p,x_2+q]$.
Considering the cases that the line, passing through $x_2+p$ and $x_2+q$, separates $x_j$ from $o$, $x_3$ or neither,
it readily follows that at least one of $[x_2,x_2+p]$ or $[x_2,x_2+q]$ crosses both $[o,x_3]$ and the ray
emanating from $x_3$ and passing through $x_j$.
Since $\Gamma_3$ does not cross $[x_2,x_2+p]$ and $[x_2,x_2+q]$, we obtain that $x_j \in [o,x_3,w_3]$ or $x_2 \in [o,x_3,w_3]$, which,
similarly like in the proof of Lemma~\ref{lem:counterclockwise}, immediately yields that $S_j$ or $S_2$ overlaps $S_3$; a contradiction.
Hence, we obtain that $o \in \inter [x_2,x_2+p,x_2+q]$.
Without loss of generality, we may choose our notation so that $q \in H$, which implies that $R_3$ crosses the segment $[x_2,x_2+q]$.

\begin{figure}[ht]
\includegraphics[width=0.8\textwidth]{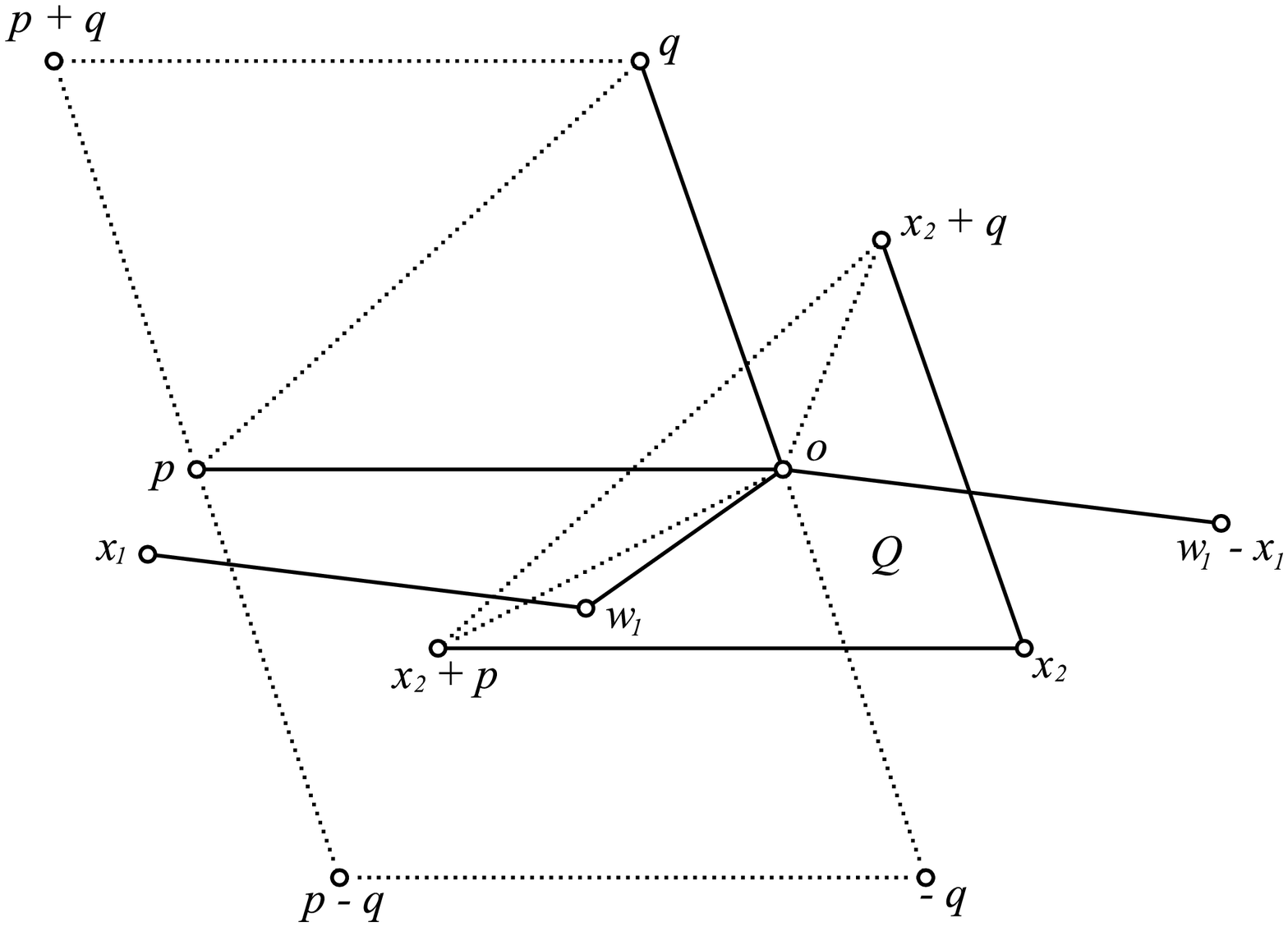}
\caption[]{}
\label{fig:lemma4}
\end{figure}

Let $Q = \inter \left( [x_2,o,x_2+p] \cup [x_2,o,x_2+q] \right)$, and consider the case that $w_1 \in Q$.
Since $S_1$ and $S$ do not overlap, we have that $x_1 \notin \inter [q,p+q,-q,p-q]$.
By Lemma~\ref{lem:Bezdek}, $x_1 \notin \inter [x_2,x_2+p,x_2+q]$, from which it readily follows that
$x_1 = \alpha p + \beta q$ with $\alpha \geq 1$.
Thus, the segment $[x_2,x_2+q]$ crosses $[o,w_1-x_1]$ (cf. Figure~\ref{fig:lemma4}).
As $[o,w_1-x_1] \subset S$, it follows that $S$ and $S_2$ overlap; a contradiction.
We may show similarly that $w_3 \notin Q$.

We obtained that, from $x_j \in \inter K_2$ and $x_j \notin \{x_1, x_2, x_3 \}$, it follows that
the angle $\angle(w_1,o,w_3)$ measured from $[o,w_1]$ to $[o,w_3]$ in the counterclockwise direction is strictly greater than $\pi$.
Note that $\angle (w_k,o,w_{k+2}) \leq \pi$ if $k \notin \{ n, 1, 2\}$, and
that $\angle(w_n,o,w_2) \leq \pi$ or $\angle(w_2,o,w_4) \leq \pi$.
Thus, $\angle (w_{i-1},o,w_{i+1})  > \pi$ holds for at most two values of $i$,
and hence the assertion immediately follows.
\end{proof}

Now we are ready to prove Theorem~\ref{thm:starlike}.
In the proof we use the following notion.
Let $D \subset \Re^2$ be a convex disk, and let $p,q \in \Re^2$.
Let $r,s \in D$ such that $[r,s]$ and $[p,q]$ are parallel, and there are no points $r',s'$ in $D$
such that $[r',s']$ is parallel to $[p,q]$ and $[r',s']$ is longer than $[r,s]$.
The \emph{$D$-distance} of $p$ and $q$ (cf. \cite{L91}) is defined as
\[
\dist_D(p,q)=\frac{2||p-q||}{||r-s||}.
\]
It is easy to see that $\dist_D(p,q)$ is equal to the distance of $p$ and $q$ in the norm
the unit disk of which is the central symmetral $\frac{1}{2}(D-D)$ of $D$.
We observe that, for any two convex disks $D \subset D'$ and points $p,q \in \Re^2$, we have
$\dist_{D'}(p,q)\leq \dist_D(p,q)$.

Note that since $S_i$ touches $S$ for every $i$, $K$ and $x_i+K$ intersect.
Thus, $\dist_K(o,x_i) = \dist_{\bar{K}}(o,x_i)\leq 2$, where $\bar{K}=\frac{1}{2}(K-K)$.
In other words, we have $X \subset 2\bar{K}=K-K$, which yields
that $\dist_C(x_i,x_j)\geq \dist_{2\bar{K}}(x_i,x_j)=\frac{1}{2}\dist_{\bar{K}}(x_i,x_j)$
for any $i \neq j$.

By Lemma~\ref{lem:nonoverlapping}, we may choose a subfamily $\F'$ of at least $\lfloor \frac{n-2}{2} \rfloor$ pairwise
separated elements of $\F$.
Let $X'$ denote the set of the translation vectors of the members of $\F'$.
Note that if $u+S$ and $v+S$ are separated, then $u+\frac{1}{2}K$ and $v+\frac{1}{2}K$ are nonoverlapping.
In other words, we have $\dist_K(u,v)\geq 1$ for any distinct $u, v \in X'$, which yields that
$\dist_{\bar{C}}(u,v) = \dist_C(u,v) \geq \frac{1}{2}$, with $\bar{C}=\frac{1}{2}(C-C)$.

Go{\l }{\c a}b \cite{G32} proved that the circumference
of every centrally symmetric convex disk measured
in its norm is at least six and at most eight (for a more accessible reference, cf. \cite{S67}).
F\'ary and Makai \cite{FM82} proved that, in any norm,
the circumferences of any convex disk $C$ and its central symmetral $\frac{1}{2}(C-C)$ are equal.
Thus, the circumference of $C$ measured in the norm
with unit ball $\bar{C}$ is at most eight.
Since $X'$ is a set of points in $\bd C$ at pairwise $\bar{C}$-distances at least $\frac{1}{2}$, we have
$\lfloor \frac{n-2}{2} \rfloor \leq \card X' \leq 16$, from which the assertion immediately follows.

\section{Proof of Theorem~\ref{thm:pocket}}

Let $K = \conv J$ and $H(J) = n$.
Consider a family $\F = \{ J_i: i=1,2,\ldots,n\}$ of pairwise nonoverlapping translates
such that each member $J_i=x_i+J$ of $\F$ touches $J$, and set $K_i = x_i + K$.
If $J = K$, then our result follows from a result of Gr\"unbaum in \cite{G61}.
Thus, we examine the case that $J$  has exactly one pocket.
Note that by \cite{HLS59}, $n \geq 6$, and if $J$ is parallelogram-like, then $n \geq 8$.
Thus, it is sufficent to prove that $n \leq 8$, and if $J$ is not parallelogram-like, then $n \leq 6$.

We choose the indices of the elements of $\F$ in a way that
$J_1 \cap J, J_2 \cap J, \ldots, J_n \cap J$ are in counterclockwise order on $\bd J$.
Let $p$ and $q$ denote the endpoints of the longest segment in $\bd K$ that contains $(\bd K) \setminus J$.

\emph{Case 1}, there is no chord of $K$, parallel to $[p,q]$, that is longer than $[p,q]$.
Then $K$ has two distinct parallel supporting lines, passing through $p$ and $q$, respectively.
Since the Hadwiger number of $J$ does not change under an affine transformation, we may assume that
$p=o$, $q=e_x$, and that the $y$-axis and the line $x=1$ support $K$.

Consider a translate $x+J$ of $J$ that overlaps $K$ but not $J$.
We show that there is no other translate of $J$ that touches $J$, overlaps $K$, and does not overlap $x+J$.
Suppose for contradiction that $y+J$ touches $J$, and it overlaps $K$ but not $x+J$.
Then $x+K$ and $y+K$ do not overlap, as otherwise $x+((\bd K) \cap J)$ and $y+((\bd K) \cap J)$ cross.
Without loss of generality, we may assume that $x+K$ and $y+K$ touch each other, $[p,q] \cap (x+K)$ is closer to $p$ than
$[p,q] \cap (y+K)$, and the $y$-coordinate of $x$ is not greater than the $y$-coordinate of $y$.
Let $[u,v]=[p,q]\cap (x+K)$, where the notation is chosen so that $u$ is closer to $p$ than $v$.
First, observe that $v+(q-p)$ is a point of $x+(q-p)+K$, and that $||v+(q-p)|| > ||q-p||$.
Furthermore, by the convexity of $K$, for any $z \in [x+q-p,v]$,
$z+K$ has a point, on the positive half of the $y$-axis, not closer to $p$ than $v+q-p$.
Thus, as $y$ is a point of the closed arc of $\bd (x+K)$ between $x+q-p$ and $v$ that does not contain $x$,
$y+K$ also has a point on the positive half of the $y$-axis, farther from $p$ than $||q-p||$, which clearly contradicts
our assumptions that $y+J$ overlaps $K$ and does not overlap $J$.
Hence, we have obtained that there is at most one member of $\F$ that overlaps $K$.
We may show similarly that there is at most one member $J_i$ of $\F$ such that $K_i$ overlaps $J$.

Let $L$ be the supporting line of $K$, parallel to and not containing $[p,q]$.
Let $[r,s] = L \cap J$, and note that $[r,s]$ may degenerate to a single point.
We choose our notation in a way that $p$, $q$, $r$ and $s$ are in counterclockwise order on $\bd J$.
Let $H^+$ be the closed half plane with $[p,q] \subset \bd H^+$ and $K \subset H^+$, and let $H^-=\Re^2 \setminus H^+$.
Let $\Gamma_p$ denote the open arc of $\bd J$, with endpoints $p$ and $s$, that does not contain $q$,
and let $\Gamma_q$ denote the open arc of $\bd J$, with endpoints $q$ and $r$, that does not contain $p$.
Clearly, if $[p,q,r,s]$ is a parallelogram, then $K$ is a parallelogram and no two translates of $K$ overlap, and thus, the assertion follows immediately from \cite{G61}.
Hence, in the following, we deal with the case that $[p,q,r,s]$ is not a parallelogram.

Consider the case that $x_i+q, x_{i+1}+q \in \Gamma_p \cup [s,r)$ for some value of $i$.
Then, clearly, $x_i, x_{i+1} \in (p-q+K)=-q+K$ (cf. Figure~\ref{fig5}), which yields that $x_i \in K_{i+1}$.
On the other hand, since $J_i$ and $J_{i+1}$ do not overlap, we have that $x_i \notin \inter K_{i+1}$,
This implies that $-q$, $x_{i+1}$ and $x_i$ are collinear, which yields that $J$ is not a disk; a contradiction.
Thus, we obtain that $x_i+q \in \Gamma_p \cup [s,r)$ for at most one value of $i$, and, by a similar argument, that $x_i \in \Gamma_q \cup [r,s)$
for at most one value of $i$.

\begin{figure}[ht]
\includegraphics[width=0.75\textwidth]{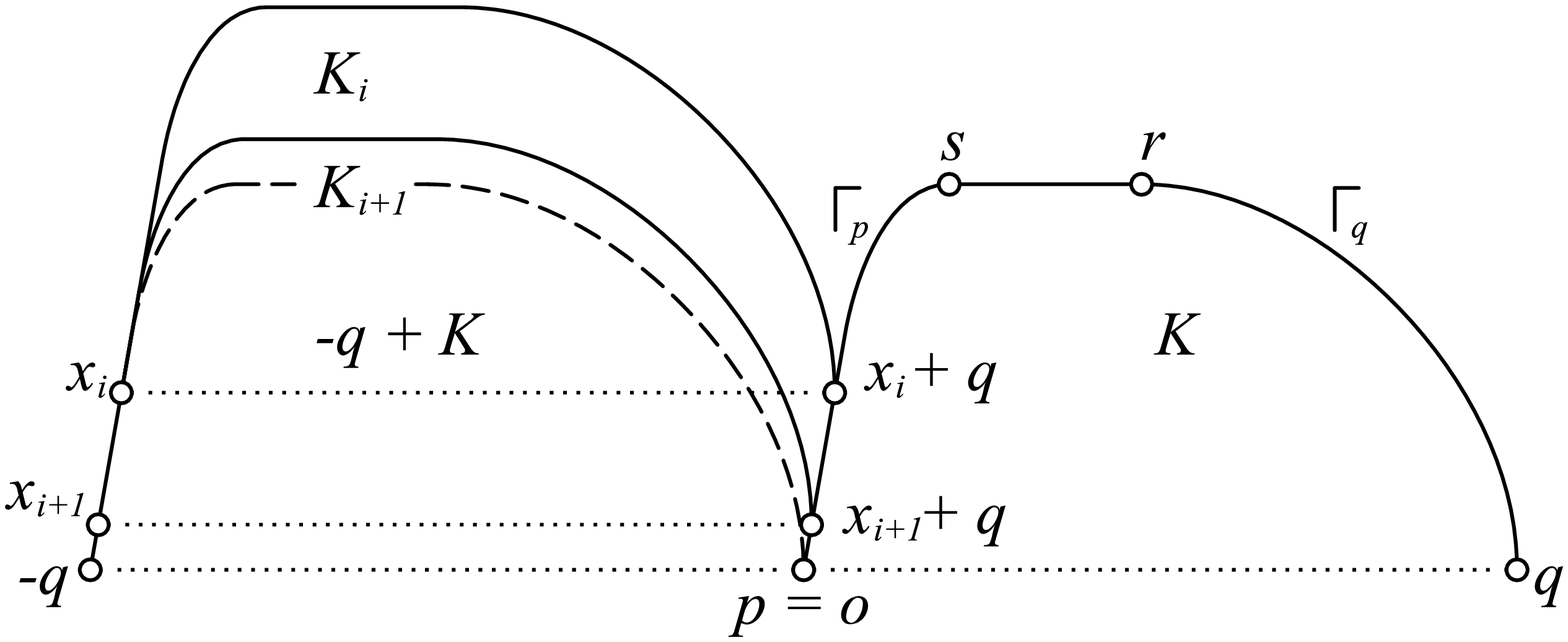}
\caption[]{}
\label{fig5}
\end{figure}

Note that for any translate $J_i \subset H^+$, at least one of the following holds:
$x_i = -q$, $x_i = q$, $x_i +q \in \Gamma_p \cup [s,r)$, $x_i \in \Gamma_q \cup [r,s)$ or $[r,s] \subset K_i$.
Hence, by the second and fourth paragraphs in Case 1, we obtain that $H^+$ contains at most five members of $\F$.
Furthermore, we observe that any two members of $\F$, that have a point in $H^-$, have nonoverlapping convex hulls,
and thus, it may hold for at most three members of $\F$.
Thus, $\card \F \leq 8$.

To finish the proof in Case 1, we examine the case that $J$ is not parallelogram-like,
and show that then $\card \F \leq 6$.
First, we show that at most four elements of $\F$ intersect $H^+$.
Suppose for contradiction that it is not so, or in other words, that for some value of $i$ we have
$J_{i-2}=q+J$, $x_{i-1} \in \Gamma_q \cup [r,s)$, $[r,s] \subset K_i$, $x_{i+1}+q \in \Gamma_p \cup [s,r)$ and $J_{i+2} = -q+J$.

Since $J_i$ and $J_{i+1}$ do not overlap, we obtain that $x_{i-1},x_{i+1} \notin (r,s)$.
If $x_{i+1}+q = s$ and $x_{i-1}=r$, then $J$ is parallelogram-like, and hence, we have that
$x_{i+1}+q \neq s$ or $x_{i-1} \neq r$, say $x_{i+1}+q \neq s$.
Then, as $J_{i+1}$ and $J_{i+2}$ do not overlap, we obtain that $x_{i+1}+q+J$ touches $J$ at $x_{i+1}+q$.
Furthermore, if $x$ is on the closed arc of $\Gamma_p \cup \{ s \}$ between $x_{i+1}+q$ and $s$, then
$x+J$ does not overlap $J$, as the region $x+K \cap K$ strictly decreases, in terms of containment, when we
move $x$ away from $x_{i+1}+q$.
This yields that $x_i$ is a point of this closed arc.
On the other hand, $x+K$ touches $J_{i-2} = q+J$ at $x+q$.
Thus, $J_{i-1}$ touches $J$ only if $x_i+q=x_{i-1}$, from which it immediately follows
that $J$ is parallelogram-like; a contradiction.

We observe that the convex hulls of any two elements of $\F$, having a point in $H^-$, are nonoverlapping,
and that at most three elements of $\F$ have points in $H^-$.
Thus, to show that $\card \F \leq 6$, it is sufficient to examine the case that, for some value of $i$,
each of $J_{i-1}$, $J_i$ and $J_{i+1}$ has a point in $H^-$.

Let $L_0$ and $L_1$ denote the $y$-axis and the line $x=1$, respectively.

\emph{Subcase 1.1}, $L_0 \cap \inter J_{i-1} \neq \emptyset$ or $L_1 \cap \inter J_{i+1} \neq \emptyset$.
Without loss of generality, let $L_0 \cap \inter J_{i-1} \neq \emptyset$, which yields that
$p$ is a common point of $J$ and $J_{i-1}$.

Consider the case that the $y$-coordinate of $x_i$ is not smaller than that of $x_{i-1}$.
Note that, by an argument similar to those used in the previous paragraphs,
if $J_i$ is contained in $K \cup (q+K_{i-1})$, then $x_i = q+x_{i-1}$, and thus, $J_i$ has a point in the open half plane $x > 1$.
Since it clearly follows also if $J_i \not\subset K \cup (q+K_{i-1})$, and it implies that $J_{i+1}$ has no point in $H^-$, which is a contradiction, we obtain that the $y$-coordinate of $x_i$ is smaller than that of $x_{i-1}$.
Similarly, if $q \in J_i$, then $J_{i+1}$ has no point in $H^-$, and thus, it follows that $q \notin J_i$.

Next, consider the case that $J_{i-2}=-q+J$.
Observe than then $q+J_{i-1}$ touches $J$. Thus, since the $y$-coordinate of $x_i$ is smaller than that of $x_{i-1}$,
any point of $J_i$, with nonnegative $y$-coordinate, is contained in $q+K_{i-1}$.
Thus, $J_i$ touches $J$ at a boundary point $u$ of $q+K_{i-1}$, which is clearly a boundary point of also $K_i$.
Since $x_i-(x_{i-1}+q)$ translates $J_i$ to $q+J_{i-1}$, this vector moves $u$ to another boundary point $u'$ of $q+J_{i-1}$; or in other words, $(x_{i-1}+q)-x_i$ moves a boundary point $u'$ of $q+J_{i-1}$ to $u$.
On the other hand, the translation by $x_{i-1}+q-x_i$ does not move any point of $[u,u']$ to a point of $K$ outside $q+K_{i-1}$.
Hence, $u \in [p,q]$, which, as $q \notin J_i$, yields that $u \in (p,q)$ and $[p,u] \subset J$.

We obtained that there is a point $u \in (p,q)$ with $[p,u] \subset J$.
We show that it yields $\card \F \leq 6$.
Observe that $q \in J_{i+1}$.
Clearly, if $q \in [x_{i+1}+r,x_{i+1}+s]$, then $J_i$ does not touch $J$.
Thus, $\inter J_{i+1} \cap H^+ \neq \emptyset$, which, by $[p,u] \subset J$, implies that $q+J \notin \F$.

Now we have $\card \F \leq 7$. Then we can have $\card \F > 6$ only if $x_{i-3}+q \in \Gamma_p \cup [s,r)$,
$[r,s] \subset J_{i+3}$ and $x_{i+2} \in \Gamma_q \cup [r,s)$.
Since $[p,u] \subset J$, we have that $x_{i-3}+q = s$, and $x_{i+3} = s$.
As the point $s+q$ is on $\bd (q+J)$, and $s+q \notin \inter K_{i+2}$,
we have that $x_{i+2}$ is in the boundary of both $J$ and $q+J$.
Thus, $x_{i+2}$ is on the line $x=1$, and $s$ is on the $y$-axis, which implies that $x_{i+1}$ is on the line $L_1$,
and that $L_1 \cap J_{i+1}$ is a translate of $[p,s]$.
On the other hand, the distance between $x_{i+2}$ and the closest point of $K_i \cap K_1$ is clearly less than that between $p$ and $s$; a contradiction.

We have shown that $-q + J \in \F$ yields that $\card \F \leq 6$.
Thus, to show that $\card \F \leq 6$,
it is sufficient to consider the case that $x_{i+2}=q$, $x_{i+3} \in \Gamma_q \cup [r,s)$,
$[r,s] \subset J_{i-3}$, $x_{i-2}+q \in \Gamma_p \cup [s,r)$.

If $L_1 \cap \inter J_{i+1} \neq \emptyset$, then we may apply an argument similar to that in the previous paragraphs.
Hence, we obtain that $L_1 \cap \inter J_{i+1} = \emptyset$, which yields that $x_{i+1}$ is on $L_1$, and that
$(x_{i+1},q) \subset (x_{i+1} + \Gamma_p)$.
Since $L_0 \cap \inter J_{i-1} \neq \emptyset$ and $J_i$ touches $J$, we obtain that
the $y$-coordinate of $x_i$ is less than that of $x_{i+1}$,
and that there is a point $u \in (p,q)$ such that $[u,q] \subset J$.
Thus, $x_{i+3} \notin \Gamma_q$, from which we have that $x_{i+3} = r$ and $x_{i-3}=r-q$.
But this implies that $J_{i-2}$ does not touch $J$; a contradiction.

\emph{Subcase 1.2}, $L_0 \cap \inter J_{i-1} \cap = \emptyset$ and $L_1 \cap \inter J_{i+1} = \emptyset$.
In this case $J \cap L_1 = [w,q]$ and $L_0 \cap J = [z,p]$ for some points $w$ and $z$ with $z \neq p$ and $w \neq q$.
Observe that if neither $q+J$ nor $-q+J$ belongs to $\F$, then $\card \F \leq 6$.
Hence, it suffices to consider the case that at least one of them, say $-q+J$, belongs to $\F$, which yields that $x_{i-1}=-z-q$, and $|| w-q|| \geq || z-p||$.

Note that $|| w-q|| > || z-p||$, as otherwise $J$ is parallelogram-like.
Since $q+J \in \F$ yields that $||w-q|| = || z-p||$, we have also that $q+J \notin \F$.
Thus, if $\card \F > 6$, then $x_{i-3}+q \in \Gamma_p \cup \{ s \}$, $[r,s] \subset K_{i+3}$
and $x_{i+2} \in \Gamma_q \cup \{ r \}$.
Without loss of generality, we may assume that $x_{i+1}=q-z$.
Since $J$ is not parallelogram-like, $L_0 \cap \inter J_{i-3} \neq \emptyset$.
Hence, since $J_{i+3}$ touches $J$, $L_0 \cap \inter J_{i+3} \neq \emptyset$, which yields that
$L_1$ supports $J_{i+2}$.
But then $w \in J_{i+2}$, and $J_{i+2}$ and $J_{i+3}$ overlap; a contradiction.
This finishes the proof in Case 1.

\emph{Case 2}; there is a chord in $K$, parallel to $[p,q]$, which is longer than $[p,q]$.
Clearly, in this case $J$ is not parallelogram-like, and hence, we need to prove that $\card \F \leq 6$.
Since the proof is similar to that of Case 1, we just sketch it.

First, we prove the following lemma that we need in the proof. We note that the proof of this lemma is included in the proof of Theorem 7 in \cite{JL07}.

\begin{lem}\label{lem:parallelogram}
Let $K$ be a convex disk. If $\bar{K}=\frac{1}{2}(K-K)$ is a parallelogram, then $K$ is a translate of $\bar{K}$.
\end{lem}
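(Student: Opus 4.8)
The plan is to normalise $\bar K$ by an affine transformation and then to trap $K$ between a square and its four vertices. Since for every invertible linear map $A$ one has $\overline{AK}=\frac{1}{2}(AK-AK)=A\bar K$, since the conclusion ``$K$ is a translate of $\bar K$'' is preserved by $A$, and since $\bar K$ is centrally symmetric about $o$ (as $K-K=-(K-K)$), I may assume that $\bar K=[-\frac{1}{2},\frac{1}{2}]^2$, equivalently that $K-K=[-1,1]^2$; at the end I transfer the conclusion back through $A$.

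The crux is the elementary identity that, for any $u\in\Re^2$, the width of $K$ in direction $u$, i.e.\ $\max_{p\in K}\langle p,u\rangle-\min_{p\in K}\langle p,u\rangle$, equals $\max_{z\in K-K}\langle z,u\rangle=\max_{z\in[-1,1]^2}\langle z,u\rangle$. Applying this with $u=e_x$ and $u=e_y$ shows that $K$ has width $1$ in both coordinate directions, so after a suitable translation $K\subseteq[0,1]^2$ (and $K$ still meets each of the four sides of this square). Now I apply the identity with $u=e_x+e_y$: the width equals $\max_{z\in[-1,1]^2}(z_1+z_2)=2$, while the inclusion $K\subseteq[0,1]^2$ forces $\max_{p\in K}(p_1+p_2)\le 2$ and $\min_{p\in K}(p_1+p_2)\ge 0$; hence equality holds in both estimates, and since the only points of $[0,1]^2$ at which $p_1+p_2$ attains the values $2$ and $0$ are $(1,1)$ and $(0,0)$, we get $(0,0),(1,1)\in K$. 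The direction $u=e_x-e_y$ yields $(1,0),(0,1)\in K$ in exactly the same way. By convexity $K\supseteq\conv\{(0,0),(1,0),(1,1),(0,1)\}=[0,1]^2$, so $K=[0,1]^2$, a translate of $\bar K=[-\frac{1}{2},\frac{1}{2}]^2$.

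The argument has no serious obstacle; the one point that genuinely matters is the use of the two diagonal directions $e_x\pm e_y$, which is what exploits the full parallelogram structure of $K-K$. An argument inspecting only the edge directions of $K-K$ would not be enough: a convex disk all of whose edges are parallel to a coordinate axis need not be a rectangle (a half-disk is such a disk, yet its central symmetral is not a parallelogram), so one cannot avoid the diagonal widths.
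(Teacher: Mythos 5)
Your proof is correct. The width identity you use --- that $\max_{p\in K}\langle p,u\rangle-\min_{p\in K}\langle p,u\rangle$ equals the support function of $K-K$ at $u$ --- is sound, the affine normalization to $K-K=[-1,1]^2$ is legitimate (difference bodies and the conclusion both transform equivariantly), and the four width computations do pin down $(0,0),(1,0),(1,1),(0,1)\in K$, forcing $K=[0,1]^2$. The paper proves the same containment of a translate of $\bar K$ inside $K$ by a different mechanism: it phrases the hypothesis as ``$K$ has constant width two in the norm of $\bar K$,'' takes the two supporting lines of $K$ parallel to one pair of sides of the parallelogram, and shows that the chords $K\cap L_1$ and $K\cap L_2$ must each be full-length translates of that side, so that $K$ already contains a translate of $\bar K$ as the quadrilateral $[b_1,b_2,b_3,b_4]$; constant width then gives equality. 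Your route is more elementary and self-contained --- it needs no relative-norm language and no normalization back and forth between $K$ and $\bar K$ beyond the support-function identity --- at the cost of an affine reduction to the square; the paper's argument works with an arbitrary parallelogram in place and fits the relative-distance framework used elsewhere in Section 3. Your closing observation is also the right diagnosis of where the parallelogram hypothesis is genuinely consumed: the two side-direction widths only trap $K$ in a box, and it is the two diagonal-direction widths (respectively, in the paper, the full-length chord condition) that rule out proper convex subsets of the box.
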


\begin{proof}
First, observe that $\bar{K}=\frac{1}{2}(K-K)$ if and only if $K$ is a convex disk of constant width two in the norm of $\bar{K}$.
Let $\bar{K} = [a_1, a_2, a_3, a_4]$ be a parallelogram, where $a_1$, $a_2$, $a_3$ and $a4$ are in this counterclockwise order in $\bd \bar{K}$.
Let $L_1$ and $L_2$ be the supporting lines of $K$ parallel to $[a_1,a_2]$
such that the translate of $L_1$ by $a_4-a_1$ is $L_2$.
Let $[b_1, b_2] = K \cap L_1$ and $[b_3,b_4] = K \cap L_2$ such that $b_2-b_1$
and $b_3-b_4$ are positive multiples of $a_2-a_1$.
Let $c_3 = b_1 + (a_3-a_1)$ and $c_4 = b_2 + (a_4-a_2)$.
Since $K$ is of constant width two, we have $[c_3, c_4] \subset [b_3, b_4]$.
Observe that $||c_4-c_3|| = 2||a_2-a_1||-||b_2-b_1||$.
As $||b_4-b_3|| \leq ||a_2-a_1||$, this implies that $||b_2-b_1|| = ||b_4-b_3|| = ||a_2-a_1||$,
$c_3 = b_3$ and $c_4 = b_4$.
Hence $K' = [b_1,b_2,b_3,b_4]$ is a translate of $\bar{K}$.
As $K' \subset K$ and $K$ is a convex disk of constant width two in the norm of $\bar{K}$, we have $K' = K$. 
\end{proof}

Now we return to the proof of our theorem.
Suppose for contradiction that $\card \F \geq 7$.
We leave it to the reader to show, by methods similar to those used in Case 1, that
the convex hulls of the elements of $\F$ are mutually nonoverlapping (though they may overlap $K$),
and that the points $x_1, x_2, \ldots, x_n$ are in convex position.
Let $\Theta$ denote the closed polygonal curve with endpoints $x_1, x_2, \ldots, x_n$ in counterclockwise order.
Observe that $\Theta \subset K-K$, and the sides of $\Theta$ are of at least unit length in the norm defined by the
difference body $K-K$ of $K$.
By Theorem 2 of \cite{L04}, there is a convex $n$-gon $P$, inscribed in $K-K$, such that the sides of $P$ are of at least unit length in the norm of $K-K$.
In other words, there are $n$ mutually nonoverlapping translates of $\bar{K}=\frac{1}{2}(K-K)$, each of which touches $\bar{K}$.
Clearly, $K$ is not a parallelogram.
Thus Lemma~\ref{lem:parallelogram} yields that $\bar{K}$ is not a parallelogram,
and hence, by \cite{G61}, we have that $n \leq H(\bar{K}) = 6$; a contradiction.

\noindent
{\bf Acknowledgement.}
The author would like to express his sincere gratitude to an anonymous referee and to M\'arton Nasz\'odi
for their many suggestions to improve the quality of this paper.

\end{document}